\newtheorem{theorem}{Theorem}[section]
\newtheorem{conjecture}[theorem]{Conjecture}
\newtheorem{lemma}[theorem]{Lemma}
\newtheorem{prop}[theorem]{Proposition}
\theoremstyle{definition}
\theoremstyle{remark}
\newtheorem{remark}[theorem]{Remark}
\newcommand{\CC}{\mathbb C}
\newcommand{\PP}{\mathbb P}
\newcommand{\V}{\mathcal{V}}
\newcommand{\tdt}{\times\dots\times}
\newcommand{\otdot}{\otimes\dots\otimes}
\newcommand{\G}{\left( SL(2)^{\times n}\right) \ltimes \mathfrak{S}_{n}}
\newcommand{\tS}{\tau \left(Seg\left(\PP V^{*}_{1}\tdt \PP V^{*}_{n} \right)\right)}
\newcommand{\ie}{\emph{i.e. }}
\begin{document}
\author{Luke Oeding}
\address{Dipartimento di Matematica ``U. Dini'', Universita di Firenze, Viale Morgagni 67/A, 50134 Firenze (Italy)}
\email{oeding@math.unifi.it}
\date{\today}

\title[Set-theoretic defining equations]{Set-theoretic defining equations of the tangential variety of the Segre variety}

\subjclass[2000]{14L30, 13A50, 14M12, 20G05, 15A72, 15A69}

% 14L30 Group actions on varieties or schemes (quotients)
% 13A50 Actions of groups on commutative rings; invariant theory
% 14M12 Determinantal varieties
% 20G05 Representation theory
% 15A72 Vector and tensor algebra, theory of invariants
% 15A69 Multilinear algebra, tensor products

\thanks{This material is based upon work supported by the National Science Foundation under Award No. 0853000: International Research Fellowship Program (IRFP),
and U. S. Department of Education grant Award No. P200A060298: Graduate Fellowships for Ph.D. Students of Need in Mathematics (GAANN)}
\maketitle

\begin{abstract}
We prove a set-theoretic version of the Landsberg--Weyman Conjecture on the defining equations of the tangential variety of a Segre product of projective spaces.    
We introduce and study the concept of exclusive rank.
For the proof of this conjecture we use a connection to the author's previous work \cite{oeding_pm_paper, oeding_thesis} and re-express the tangential variety as the variety of principal minors of symmetric matrices that have exclusive rank no more than one.
\end{abstract}
\section{Introduction}

For each $i$, $1\leq i \leq n$, let $V_{i}$ be a complex vector space of dimension $n_{i}+1$ and let $V_{i}^{*}$ be the dual vector space.  The Segre product $Seg(\PP V_{1}^{*}\tdt \PP V_{n}^{*})$ is the variety of indecomposable tensors in $\PP(V_{1}^{*}\otdot V_{n}^{*})$. If $V$ is a complex vector space and $X \subset \PP V$ is any variety, the \emph{tangental variety} of $X$, denoted $\tau(X)$, is the union of all points on all embedded tangent lines  (\ie $\PP^{1}$'s) to $X$, \cite{Zak}.

%If $X\subset \PP(V_{1^{*}}\otdot V_{n}^{*})$ is a variety invariant under the action of $SL(V_{1})\tdt SL(V_{n})$ we say $X$ is a $G$-variety for $G=SL(V_{1})\tdt SL(V_{n})$.  $\tS$ is a typical $G$ variety. The ideal of such a $G$-variety is a $G$-submodule of $\bigoplus_{d}S^{d}(V_{1}\otdot V_{n})$.  Representation theory allows one to decompose the ideal of a $G$-variety further by considering the irreducible $G$-modules (those that have no proper, non-trivial submodules).  The irreducible $SL(V_{1})\tdt SL(V_{n})$-modules of polynomials are indexed by partitions $\pi_{1},\dots,\pi_{n}$ of $d$ as  tensor products of
Let $S_{\pi_{1}}V_{1}\otdot S_{\pi_{n}}V_{n}$ denote the irreducible $SL(V_{1})\tdt SL(V_{n})$ module associated to the partitions $\pi_{1},\dots,\pi_{n}$ of $d$.  Using and cohomological techniques and in particular Weyman's geometric method (see \cite{Weyman}), Landsberg and Weyman identified modules of this form in the ideal of $\tS$ and made the following conjecture.

\begin{conjecture}[Conjecture 7.6. \cite{LWtan}]\label{conj} $I(\tS)$ is generated by the quadrics in $S^{2} (V_{1}\otdot V_{n})$ 
which have at least four $\bigwedge^{2}$ factors, the cubics with four $S_{2,1}$ factors and all other factors $S_{3,0}$, 
and the quartics with three $S_{2,2}$'s and all other factors $S_{4,0}$. 
\end{conjecture}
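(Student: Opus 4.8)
The plan is to prove the \emph{set-theoretic} version of Conjecture~\ref{conj}: that the common zero locus of the listed modules of quadrics, cubics, and quartics is exactly $\tS$. The strategy is to transport the problem, via the results announced in the abstract and the author's earlier work \cite{oeding_pm_paper, oeding_thesis}, from the Segre-tangential setting into the language of principal minors of symmetric matrices. The key identification to set up first is that $\tS$ equals the projectivization of the set of vectors of principal minors of $(n\times n)$ symmetric matrices of \emph{exclusive rank} at most one; this reinterpretation is what makes the representation-theoretic modules in the conjecture tractable, since on the principal-minor side they become concrete polynomial conditions (the $2\times 2$ ``hyperdeterminantal'' quadrics, Cayley's $2\times2\times2$ hyperdeterminant-type cubics, and the quartics) in the coordinates indexed by subsets of $\{1,\dots,n\}$.

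With that dictionary in place, I would proceed in the following order. First, establish the easy inclusion: every point of $\tS$ satisfies all the listed equations. For this it suffices, by $SL(V_1)\tdt SL(V_n)$-equivariance and the fact that $\tS$ is the closure of an orbit-type family, to check that the principal-minor vector of a generic exclusive-rank-$\le 1$ symmetric matrix is annihilated by the generators on the principal-minor side — a direct computation, or a reduction to the known vanishing on $\tau$ of the quadrics with four $\bigwedge^2$ factors together with the cubic/quartic relations among principal minors. Second, and this is the substantive direction, take an arbitrary point $p$ in the zero locus $Z$ of all the listed modules and show $p\in\tS$. Here I would run an induction on $n$: the $n\le 3$ base cases are handled directly (for $n=3$ the tangential variety of $\PP^1\times\PP^1\times\PP^1$ and its equations are classically understood, and the quartics are vacuous in low dimension where the relevant modules do not exist). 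For the inductive step, I would use the ``inheritance''/flattening structure: restricting the coordinates of $p$ to subsets avoiding a fixed index $i$ produces a point in the zero locus of the corresponding equations for $n-1$ factors, hence by induction a point of the smaller tangential variety; one then glues these restrictions, using the quadratic and cubic equations that mix the index $i$ in and out, to reconstruct a symmetric matrix of exclusive rank $\le 1$ whose principal minors are $p$.

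The main obstacle I expect is precisely this gluing/reconstruction step: controlling \emph{exclusive rank} globally from the local (subset-restricted) data. The notion of exclusive rank — introduced and studied in the paper for exactly this purpose — is the right invariant because it is insensitive to the diagonal entries (which are not recorded by off-diagonal-supported minors) while still being cut out by low-degree equations; the delicate point is showing that a collection of coordinates satisfying all quadrics, cubics, and quartics \emph{forces} the reconstructed symmetric matrix to have exclusive rank $\le 1$ rather than merely some larger bound, and that the finitely many sign/normalization ambiguities in passing from principal minors back to a matrix can be resolved consistently. I anticipate that the quartics with three $S_{2,2}$ factors are doing essential work here, ruling out the spurious components that the quadrics and cubics alone would allow, and that verifying this is where the bulk of the argument — likely an orbit-by-orbit analysis under the symmetry group $\G$ acting on principal-minor space — will be concentrated. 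Once the reconstruction is complete, $Z\subseteq\tS$ follows, and combined with the first step we get $Z=\tS$ set-theoretically, which is the assertion of Conjecture~\ref{conj} up to radical.
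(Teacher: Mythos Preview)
Your high-level translation is right: the paper does prove the set-theoretic statement by identifying $\tS$ with the image under the principal-minor map $\varphi$ of the symmetric matrices of exclusive rank at most one. But your proposed execution diverges from the paper in two essential ways, and contains a confusion about which equations play which role.

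First, the paper does \emph{not} run an induction on $n$ with a gluing/reconstruction step. The hard direction is handled in one stroke by quoting Theorem~\ref{pmtheorem} (the set-theoretic Holtz--Sturmfels result from the author's earlier work): the quartic module with three $S_{2,2}$ factors already cuts out $Z_n$ on the nose, so any $p$ in the zero locus is $\varphi([A,t])$ for some symmetric $A$. There is no need to rebuild $A$ from restrictions to sub-index sets; your ``delicate gluing'' is precisely the content of Theorem~\ref{pmtheorem}, which is taken as input, not reproved. After that, the paper pulls the three copies of $S_{2,1}S_{2,1}S_{2,1}S_{2,1}$ back through $\varphi$ by an explicit Young-symmetrizer/computer-algebra computation and finds that their radical on $S^2\CC^n$ is generated by the $2\times 2$ E-minors $a_{ik}a_{jl}-a_{il}a_{jk}$; thus the cubics force $A$ to have E-rank $\le 1$. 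Finally, the identification $\varphi(\{\text{E-rank}\le 1\})=\tS$ (Proposition~\ref{E-rank one}) is proved not by induction but by a symmetry-plus-dimension argument: the image is shown to be an irreducible $\G$-variety of dimension $2n$ (Lemma~\ref{glemma}, using Lemma~\ref{equivariant} applied to the Lagrangian Grassmannian inside $Sp(2n)$), it contains the $\G$-orbit of $\varphi$ of honest rank-one matrices, and that orbit is already $\tS$ (Proposition~\ref{veronese}).

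Second, you have the degrees and base case inverted. Cayley's $2\times 2\times 2$ hyperdeterminant is the \emph{quartic}, and for $n=3$ it is the only equation present (there are no $S_{2,1}^{\,4}$ cubics with fewer than four factors); indeed $\tau(Seg(\PP^1\times\PP^1\times\PP^1))=Z_3$. The quadrics with four $\bigwedge^2$ factors are not ``$2\times 2$ hyperdeterminantal quadrics'' on the minor side; the paper in fact shows they are redundant set-theoretically, since on pullback $\bigwedge^2\bigwedge^2\bigwedge^2\bigwedge^2$ gives the same condition as one of the cubic copies. So the result proved is slightly stronger than Conjecture~\ref{conj} at the set level: only the cubics and quartics are needed.
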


The \emph{secant variety} of $X$, denoted $\sigma(X)$ is the variety of all embedded secant $\PP^{1}$'s to $X$, and since every tangent line to $X$ is the limit of secant lines, we have $\tau(X)\subset \sigma(X)$.

If $X = Seg(\PP V_{1}^{*}\tdt \PP V_{n}^{*})$, then $\sigma(Seg(\PP V_{1}^{*}\tdt \PP V_{n}^{*}))$ is contained in a subspace variety (or rank variety), namely $Sub_{2,\dots,2}(V_{1}^{*}\otdot  V_{n}^{*})$, which is all tensors $[T]\in \PP ( V_{1}^{*}\otdot  V_{n}^{*})$
such that there exists auxiliary subspaces  $V_{i}'^{*}$  with $\dim(V_{i}'^{*}) = 2$ for  $1\leq i\leq n$ and $T\in V_{1}'^{*}\otdot V_{n}'^{*}$. 

%To see that $\sigma(X) \subset Sub_{2,\dots,2}(V_{1}^{*}\otdot  V_{n}^{*})$, let $[a_{1}\otdot a_{n}]$ and $[b_{1}\otdot b_{n}]$ be generic points of $X$, so that $[a_{1}\otdot a_{n}+ b_{1}\otdot b_{n}]$ is a generic point of $\sigma(X)$.  Then it is clear that $a_{1}\otdot a_{n}+ b_{1}\otdot b_{n} \in \{a_{1},b_{1}\}\otdot \{a_{n},b_{n}\}$, where $\{\}$ indicates linear span.

In \cite{LWtan} Landsberg and Weyman point out that because $\tau(X) \subset \sigma(X)$ and $\sigma(X)$ is in $Sub_{2,\dots,2}(V_{1}^{*}\otdot V_{n}^{*})$, it is sufficient to answer Conjecture \ref{conj} in the case that $V_{i}\simeq \CC^{2}$. We will prove the set-theoretic version of this conjecture in this case.

Our point of departure is to consider the (not immediately obvious) embedding of $\tS$ as a subvariety of $Z_{n}$ -- the variety of principal minors of symmetric $n\times n$ matrices.  We give a precise definition of $Z_{n}$ and list some of its properties in Section \ref{pmSection}.

In the case $n=3$ the ideal of the tangential variety $\tau(Seg(\PP^{1} \times \PP^{1} \times \PP^{1}))$ is defined by Cayley's hyperdeterminant of format $2\times 2 \times 2$, and this is the quartic equation in the conjectured ideal.

In \cite{HSt}, Holtz and Sturmfels showed that the ideal of $Z_{3}$ is generated by the same polynomial, therefore $\tau(Seg(\PP^{1}\times \PP^{1}\times \PP^{1})) = Z_{3}$.  In general the two varieties are not equal but one inclusion holds, namely $\tS \subset Z_{n}$ for all $n\geq 3$, \cite{oeding_thesis}.

Holtz and Sturmfels conjectured that the \emph{hyperdeterminantal module} -- the span of $\G$-orbit of Cayley's $2\times 2\times 2$ hyperdeterminant -- generates the ideal of $Z_{n}$. The hyperdeterminantal module is the module of quartic polynomials polynomials with three $S_{2,2}$'s and all other factors $S_{4,0}$, \ie the quartics in the Landsberg--Weyman Conjecture. In  \cite{oeding_pm_paper,oeding_thesis} we proved the set-theoretic version of the Holtz--Sturmfels Conjecture:

\begin{theorem}[\cite{oeding_pm_paper, oeding_thesis}]\label{pmtheorem}
Let $Z_{n} \subset \PP (V_{1}^{*} \otdot V_{n}^{*})$ be the variety of principal minors of symmetric matrices and let $HD$ be the module of quartic polynomials with three $S_{2,2}$'s and all other factors $S_{4,0}$.  Then, as sets, $\V(HD) = Z_{n}$.
\end{theorem}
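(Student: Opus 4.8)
The plan is to prove the two inclusions $Z_n\subseteq\V(HD)$ and $\V(HD)\subseteq Z_n$ by quite different arguments. For the first, since $I(Z_n)$ is radical it is equivalent to show $HD\subseteq I(Z_n)$, and I would reduce to $n=3$: there $HD$ is the line spanned by Cayley's $2\times 2\times 2$ hyperdeterminant $\Delta$, and $\Delta\in I(Z_3)$ by direct computation (Holtz--Sturmfels \cite{HSt}). In general, for any three indices $i,j,k$ and any $S\subseteq\{1,\dots,n\}\setminus\{i,j,k\}$, a Schur complement identity shows that the eight principal minors $\Delta_{S'}(A)$ of a symmetric matrix $A$ with $S\subseteq S'\subseteq S\cup\{i,j,k\}$ equal $\det(A_S)$ times the eight principal minors of a single $3\times 3$ symmetric matrix, so the hyperdeterminant of this $2\times2\times2$ ``slice'' vanishes identically on $Z_n$. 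Since $\G$ acts on $Z_n$ -- the $SL(2)$ factors acting by M\"obius substitutions on the multiaffine generating polynomial $p_A(x)=\sum_S\Delta_S(A)\prod_{i\in S}x_i$, and $\mathfrak{S}_n$ permuting the variables -- the ideal $I(Z_n)$ is $\G$-stable, and the $\G$-span of these slice hyperdeterminants is all of $HD$; therefore $HD\subseteq I(Z_n)$.

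For the harder inclusion $\V(HD)\subseteq Z_n$, I would use that both $Z_n$ and $\V(HD)$ are $\G$-stable, so that it suffices to meet each $\G$-orbit of $\V(HD)$ inside $Z_n$, and then induct on $n$, with base cases $n\le 2$ (where $HD=0$ and $Z_n=\PP(V_1^*\otdot V_n^*)$) and $n=3$ (where $\V(HD)=\V(\Delta)=Z_3$ by \cite{HSt}). For the inductive step, given $[T]\in\V(HD)$ I split off the last tensor factor, writing $T=T_0\otimes x_n^{0}+T_1\otimes x_n^{1}$ with $T_0,T_1\in V_1^*\otdot V_{n-1}^*$. Specializing the $S_{4,0}$ factor in slot $n$ to the five weight vectors of $S^{4}\CC^{2}$ turns each generator of $HD$ supported on a triple $I\subseteq\{1,\dots,n-1\}$ into the five partial polarizations of the corresponding $(n-1)$--factor generator evaluated on $(T_0,T_1)$; hence $T_0$, $T_1$, and indeed every member $\lambda T_0+\mu T_1$ of the pencil they span lie in the $(n-1)$--factor zero locus, which by the inductive hypothesis equals $Z_{n-1}$. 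A generic member of a pencil contained in $Z_{n-1}$ is the principal minor vector of a \emph{nonsingular} symmetric $(n-1)\times(n-1)$ matrix, so -- unless the whole pencil is trapped in the degenerate locus, treated below -- a M\"obius substitution in the slot-$n$ variable lets me assume that $T_0$ is the principal minor vector of a fixed nonsingular $A_0$.

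It then remains to recognize $T_1$. Writing $A=\left(\begin{smallmatrix}A_0 & v\\ v^{T} & c\end{smallmatrix}\right)$, the Schur complement identity $\Delta_{S\cup\{n\}}(A)=c\,\Delta_S(A_0-c^{-1}vv^{T})$ for $S\subseteq\{1,\dots,n-1\}$ shows that $[T]\in Z_n$ precisely when some symmetric matrix $B$ realizing $T_1$ differs from a scalar multiple of $A_0$ by a symmetric matrix of rank $\le 1$ -- equivalently, the pencil spanned by $A_0$ and $B$ has a member of rank $\le 1$ -- from which $v$ and $c$ are then recovered. I expect the remaining generators of $HD$, those supported on triples containing $n$, to translate into the vanishing of exactly the $2\times 2$ minors forcing this rank drop, with the residual sign ambiguity in the choice of $B$ organizing itself into a cocycle condition on the complete graph $K_{n-1}$ that is triangle-consistent by those same generators and hence -- triangles spanning the cycle space of $K_{n-1}$ -- globally solvable; tracking the data through then yields the matrix $A$ and shows $[T]\in Z_n$.

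The hard part will be the degenerate $\G$-orbits on which this normalization is unavailable: those for which no slice of $T$ -- in any of the $n$ tensor factors -- is the principal minor vector of a nonsingular matrix, and those for which $T$ already lies in a proper subspace variety $Sub_{2,\dots,2}(V_1^*\otdot V_n^*)$ with fewer than $n$ essential factors, where Schur complements cannot be formed. These strata must be handled separately -- by slicing along a more favorable factor, by a secondary induction on the number of essential factors using the containment $\V(HD)\subseteq Sub_{2,\dots,2}$, and by degenerating from the generic stratum -- and it is organizing this stratification, rather than any single computation, that constitutes the bulk of the work.
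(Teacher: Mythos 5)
This theorem is not proved in the present paper at all: it is quoted from \cite{oeding_pm_paper, oeding_thesis}, so your proposal has to stand on its own as a proof of the set-theoretic Holtz--Sturmfels conjecture, which is itself the subject of a substantial separate paper. Your easy inclusion $Z_n\subseteq \V(HD)$ is essentially fine: the Schur-complement identity does express the eight minors $\Delta_{S'}(A)$, $S\subseteq S'\subseteq S\cup\{i,j,k\}$, as $\det(A_S)$ times the principal minors of a symmetric $3\times 3$ matrix whenever $\det(A_S)\neq 0$, so each slice hyperdeterminant vanishes on a dense subset of $Z_n$ and hence on $Z_n$; combined with the known $\G$-invariance of $Z_n$ and the known identification of $HD$ with the span of the $\G$-orbit of a single slice hyperdeterminant, this gives $HD\subseteq I(Z_n)$.

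The genuine gap is in the reverse inclusion, which is the entire content of the theorem. The slicing/polarization step placing the whole pencil $\lambda T_0+\mu T_1$ inside $Z_{n-1}$ is sound, but from there on your argument is explicitly conjectural: you ``expect'' the generators supported on triples containing $n$ to translate into precisely the $2\times 2$ minors forcing a rank-one member in the pencil spanned by $A_0$ and $B$, and you ``expect'' the sign ambiguities in reconstructing $B$ from $T_1$ to assemble into a triangle-consistent, globally solvable cocycle on $K_{n-1}$. Neither claim is verified, and neither is routine: recovering a symmetric matrix from its principal minors is exactly the delicate point (off-diagonal entries are determined only up to signs constrained by the $3\times 3$ minors), and showing that the quartic equations force a consistent sign choice is where the hyperdeterminantal module must actually do work. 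Moreover, you set aside the degenerate strata --- pencils trapped in the boundary of $Z_{n-1}$, points where no slice in any factor comes from a nonsingular matrix, points lying in smaller subspace varieties --- and these are not a clean-up step: in the cited proof the analogous case analysis occupies most of the argument. So what you have is a plausible plan, close in spirit to the strategy actually used in \cite{oeding_pm_paper} (slice off a factor, induct, reconstruct the matrix), but its decisive steps are left as expectations, and as written it does not prove the statement.
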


In this paper we develop an understanding of the polynomials in the Landsberg--Weyman conjecture via their connection to $Z_{n}$. Using this connection we arrive at the following:
\begin{theorem}\label{main theorem} $\tS$ is cut out set-theoretically by the cubics in $S^{3} (V_{1}\otdot V_{n})$  with four $S_{2,1}$ factors and all other factors $S_{3,0}$, 
and the quartics in $S^{4} (V_{1}\otdot V_{n})$  with three $S_{2,2}$'s and all other factors $S_{4,0}$. 
\end{theorem}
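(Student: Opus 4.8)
Throughout we work in the case $V_i \simeq \CC^2$, which suffices by the remarks above. Write $C$ for the module of cubics with four $S_{2,1}$ factors and all other factors $S_{3,0}$, write $HD$ for the hyperdeterminantal module of quartics, and let $J$ be the ideal they generate. One inclusion is immediate from the work of Landsberg and Weyman, who showed that $C$ and $HD$ lie in $I(\tS)$; hence $\tS \subseteq \V(J)$. For the reverse inclusion, note that $HD \subseteq J$ forces $\V(J) \subseteq \V(HD)$, and $\V(HD) = Z_n$ by Theorem \ref{pmtheorem}. So every point of $\V(J)$ is the principal-minor vector $[A]$ of a symmetric matrix $A \in \Sym^2 \CC^n$, and it remains to prove that, for such $A$, the cubics in $C$ vanish on $[A]$ precisely when $[A] \in \tS$.

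The main new tool is the notion of \emph{exclusive rank} of a symmetric matrix, which I would define so as to be invariant under the group $\G$ acting on $Z_n$ and detectable on principal submatrices and Schur complements. The first step is the structural identity
\[
\tS \;=\; \bigl\{\, [A] \in Z_n \;:\; A \text{ has exclusive rank} \le 1 \,\bigr\}.
\]
For the inclusion $\subseteq$: the tangent vectors to the Segre along its diagonal (decomposable) locus are exactly the principal-minor vectors of the matrices $D + \mu\mu^\top$ with $D$ diagonal and $\mu \in \CC^n$ --- by the matrix determinant lemma the principal minor of $D + \mu\mu^\top$ on a subset $S$ equals $\prod_{j \in S} d_j + \sum_{i \in S} \mu_i^2 \prod_{j \in S \setminus i} d_j$ --- and these matrices have exclusive rank at most $1$ by design. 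For $\supseteq$ one uses the $\G$-action on $Z_n$ to carry any exclusive-rank-$\le 1$ matrix into this normal form. Granting the identity, the theorem reduces to showing that for $[A] \in Z_n$ the cubics in $C$ vanish on $[A]$ if and only if $A$ has exclusive rank at most $1$.

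I would prove this equivalence by induction on $n$. The module $C$ vanishes for $n \le 3$ and first appears at $n = 4$; for $n \ge 5$ it is obtained by inflating the $n = 4$ cubics (the $S_{3,0}$ factors being precisely the shape of such an inflation), so the vanishing of $C$ on $[A]$ is equivalent to the vanishing of the four-factor cubics on every four-factor slice of $[A]$, that is, on $[A]$ contracted with covectors in the remaining $n - 4$ slots. Each such slice kills both $HD$ in $n-1$ factors (because $[A] \in Z_n = \V(HD)$) and $C$ in $n-1$ factors (because $[A] \in \V(C)$), and --- up to the action of $SL(2)^{\times n}$ --- it is the principal-minor vector of a principal submatrix or of a Schur complement of $A$; by the inductive hypothesis every such $(n-1)$-factor slice therefore lies in the tangential variety of the Segre of $n-1$ copies of $\PP^1$. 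It then remains to glue: from the fact that every such slice has exclusive rank $\le 1$, together with $[A] \in Z_n$, one must conclude that $A$ itself has exclusive rank $\le 1$. With the definition of exclusive rank arranged to be local on $(n-1)$-subsets, this step should be essentially formal once the base case $n = 4$ is in hand.

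The principal obstacle is thus the base case $n = 4$: to show directly that a symmetric $4 \times 4$ matrix $A$ with $[A] \in Z_4$ on which the four $S_{2,1}$ cubics vanish is $\G$-equivalent to a matrix $D + \mu\mu^\top$. I expect the work to concentrate in (i) turning the representation-theoretic description of $C$ into an explicit, tractable system of polynomial equations, and (ii) handling the degenerate strata --- symmetric matrices with vanishing entries or non-generic rank --- on which the normal form cannot be produced by naively solving for $D$ and $\mu$, and where one must instead argue stratum by stratum or by a degeneration argument using that $\tS$ is closed. A secondary task is to choose the definition of exclusive rank so that it is simultaneously $\G$-invariant, equivalent to membership in $\tS$ inside $Z_n$, and detectable on $(n-1)$-subsets, so that the induction runs smoothly. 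We remark finally that the quadrics of Conjecture \ref{conj} play no role in the set-theoretic statement; their redundancy here is a consequence of the theorem.
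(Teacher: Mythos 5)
Your proposal follows the same overall skeleton as the paper (use Theorem \ref{pmtheorem} to land in $Z_{n}$, characterize $\tS$ inside $Z_{n}$ by an ``exclusive rank $\le 1$'' condition on a preimage matrix, and match that condition with the cubics), but the two steps that carry essentially all of the content are left open, so this is a plan rather than a proof. First, you never actually define exclusive rank: you list desiderata ($\G$-invariance, detectability on $(n-1)$-subsets, equivalence with membership in $\tS$) and defer the choice, whereas the paper defines it concretely via vanishing of all $2\times 2$ minors with disjoint row and column index sets and proves the $\G$-invariance (Proposition \ref{fixed}) through the inclusion $SL(2)^{\times n}\subset GL(2n)$ acting on the Grassmannian of all minors. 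Second, the equivalence ``the cubics vanish at $\varphi([A,t])$ if and only if $A$ has E-rank $\le 1$'' is exactly where the work lies, and you explicitly postpone its base case $n=4$ as ``the principal obstacle.'' In the paper this is done by constructing the three highest-weight vectors $F_{1},F_{2},F_{3}$ of $S_{2,1}S_{2,1}S_{2,1}S_{2,1}$ via Young symmetrizers, pulling the module back to symmetric matrices, and computing that the radical of the pullback ideal is the prime ideal $\langle a_{1,3}a_{2,4}-a_{1,4}a_{2,3},\; a_{1,2}a_{3,4}-a_{1,4}a_{2,3}\rangle$ of exclusive minors; the general case is handled by inflating the same highest-weight vectors with $S_{3,0}$ factors, not by the slice-and-Schur-complement induction you sketch (whose key claim, that a contraction of a point of $Z_{n}$ is, up to the group, the principal-minor vector of a principal submatrix or Schur complement, is itself unproven). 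Note also that only one implication is needed: vanishing of the cubics must force the E-minors to vanish.

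More seriously, the step you give for the direction the theorem actually needs --- E-rank $\le 1$ implies the image point lies in $\tS$ --- is not sound as stated. ``Using the $\G$-action to carry any exclusive-rank-$\le 1$ matrix into the normal form $D+\mu\mu^{\top}$'' fails at the matrix level: a symmetric matrix whose only nonzero off-diagonal entries are $a_{1,2}=a_{1,3}=1$ has every $2\times 2$ exclusive minor equal to zero, yet cannot be written as diagonal plus symmetric rank one (one would need $y_{1}y_{2}=y_{1}y_{3}=1$ and $y_{2}y_{3}=0$); such matrices are only limits of the normal form. So this direction requires either a closure/limit argument (using that $\tS$ is closed and that the E-rank-$\le 1$ locus is irreducible, being the closure of the $D+\mu\mu^{\top}$ locus) or the paper's route: show $\varphi(X)$ is an irreducible $\G$-variety (Lemma \ref{glemma}, via the Lagrangian Grassmannian and Lemma \ref{equivariant}), show the $\G$-orbit of images of rank-one matrices is $\tS$ (Proposition \ref{veronese}), and conclude equality by a dimension count (Proposition \ref{E-rank one}). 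Your matrix-determinant-lemma computation does give a clean direct proof of the easy inclusion (images of $D+\mu\mu^{\top}$ lie on tangent lines, essentially Proposition \ref{veronese}), but the inclusion your theorem depends on is precisely the one your sketch leaves unjustified.
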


\begin{remark}

Notice that we are not asking for any quadratic equations.
So we are proving something slightly stronger than the set-theoretic Landsberg--Weyman Conjecture. That is, we consider \emph{less} polynomials in the ideal, and show that these suffice to cut out the variety set-theoretically.
\end{remark}

Here is an outline of the rest of the paper and of the proof of Theorem \ref{main theorem}.
We know that all of the polynomials in the conjecture are in the ideal of the tangential variety. For the set-theoretic result, it remains to show that the tangential variety contains the zero set of these polynomials.

By Theorem \ref{pmtheorem} the set of quartics cuts out $Z_{n}$ so we need to show that the subvariety of $Z_{n}$ defined by intersecting with the zero set of the quadrics and cubics coincides with the tangential variety.

In Section \ref{polynomials} we explicitly construct the quadrics and cubics in the ideal of the tangential variety and then pull these polynomials back to the space of symmetric matrices.  We show that the quadric equations are unnecessary for the set-theoretic result. We then consider the subvariety $X \subset S^{2} \CC^{n}$ defined by this pullback.

The description of this variety $X$, motivates the introduction of the \emph{exclusive rank} (or \emph{E-rank}) of a matrix.  In Section \ref{E-rank} we define E-rank and in Proposition \ref{fixed} we show that E-rank is an invariant of $\G \subset GL(2n)$ with a natural action which we describe.

In Section \ref{E-rank01} we study the principal minors of E-rank zero and one symmetric matrices. Finally in Proposition \ref{E-rank one}, we show that the image under the principal minor map of the symmetric matrices with E-rank no larger than one is exactly the tangential variety.  This will show that every point in the zero set of the cubic and quartic polynomials in the conjecture has a symmetric E-rank one matrix in $X$ mapping to it under the principal minor map.  But the image of $X$ under the principal minor map is the tangential variety, the original point must be in the tangential variety and this completes the proof.

\section{The variety of principal minors of symmetric matrices}\label{pmSection}
To give a precise definition of $Z_{n}$ we need some notation.
Let $I = (i_1,  \dots  i_n)$ be a binary multi-index, with $i_k \in \{0,1\}$ for $k = 1,  \dots , n$, and let $| I | = \sum_{k=1}^{n}i_{k}$. For notational compactness, we will drop the commas and parentheses in the expression of $I$ when there is no danger of confusion.
 
If $A$ is an $n\times n$ matrix, then let $\Delta^{I}_{J}(A)$ denote the minor of $A$ formed by taking the determinant of the submatrix of $A$ with rows indexed by $I$ and columns indexed by $J$, in the sense that the submatrix of $A$ is formed by only including the $k^{th}$ row (respectively column) of $A$ whenever $i_{k} = 1$ (respectively $j_{k}=1$).  When $I=J$, the minor is said to be \emph{principal}, and we will denote it by $\Delta_{I} =\Delta_{I}^{I}$.

For $1\leq i \leq n$ let $V_{i}\simeq \CC^{2}$ and consider $V_1\otimes V_2\otimes \dots \otimes V_n \simeq \CC^{2^n}$. A choice of basis $\{x_i^0,x_i^1\}$ of $V_i$ for each $i$ determines a basis of $V_{1}\otimes \dots \otimes V_{n}$.  We represent basis elements compactly by setting $X^{I} := x_{1}^{i_{1}}\otimes x_{2}^{i_{2}}\otimes  \dots \otimes x_{n}^{i_{n}}$.  We use this basis to introduce coordinates on $\PP \CC^{2^{n}}$; if $P= [C_{I}X^{I}] \in \PP \CC^{2^{n}}$, the coefficients $C_{I}$ are the coordinates of the point $P$.

Let $S^{2}\CC^{n}$ denote the space of symmetric $n\times n$ matrices. The projective variety of principal minors of $n\times n$ symmetric matrices, $Z_n$, is defined by the following rational map,
\begin{eqnarray*}
 \varphi   : \PP(S^2 \CC^n \oplus  \CC)  & \dashrightarrow &  \PP \CC^{2^n} \\{}
 [A,t]  & \longmapsto & \left[t^{n-|I|}\Delta_{I}(A)\; X^I \right].
\end{eqnarray*}
The map $\varphi$ is defined on the open set where $t\neq0$. Moreover, $\varphi$ is homogeneous of degree $n$, so it is well defined on projective space.

\section{The pull-back of polynomials in the Landsberg--Weyman Conjecture symmetric matrices via $Z_{n}$}\label{polynomials}
\subsection{Background and notation}
If $X\subset \PP(V_{1^{*}}\otdot V_{n}^{*})$ is a variety invariant under the action of $SL(V_{1})\tdt SL(V_{n})$ -- of which $\tS$ and $Z_{n}$ are two examples -- we say $X$ is a $G$-variety for $G=SL(V_{1})\tdt SL(V_{n})$.  The ideal of such a $G$-variety is a $G$-submodule of $\bigoplus_{d}S^{d}(V_{1}\otdot V_{n})$.  Each degree-$d$ piece has an \emph{isotypic decomposition} which Landsberg and Manivel recorded in \cite{LM04} as follows:

\begin{prop}[Landsberg--Manivel \cite{LM04} Proposition 4.1]\label{LMdecomp}
 Let $V_1, \dots , V_n$ be vector spaces and let $V=V_1\otimes\dots\otimes V_{n}$, and let $G= GL(V_1)\tdt GL(V_n)$. Then the following decomposition as a direct sum of irreducible $G$-modules holds:
\[S^d(V_1\otdot V_n) = \bigoplus _{|\pi_1|=\dots=|\pi_k|=d} ([\pi_1]\otdot [\pi_k])^{\mathfrak{S}_d} \otimes S_{\pi_1}V_{1} \otdot S_{\pi_n} V_{n}
\]
where $[\pi_{i}]$ are representations of the symmetric group $\mathfrak{S}_{d}$ indexed by partitions $\pi_{i}$ of $d$, $([\pi_1]\otimes\dots\otimes[\pi_k])^{\mathfrak{S}_d}$ denotes the space of $\mathfrak{S}_d$-invariants (\ie, instances of the trivial representation) in the tensor product, and $S_{\pi_{i}}V_{i}$ are Schur modules.
\end{prop}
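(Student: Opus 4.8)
The plan is to derive this decomposition from Schur--Weyl duality applied separately to each tensor factor $V_i$, combined with the identification (valid over $\CC$) of $S^d V$ with the space of $\mathfrak{S}_d$-invariants in $V^{\otimes d}$. First I would regroup the factors of the $d$-th tensor power:
\[V^{\otimes d} = (V_1 \otdot V_n)^{\otimes d} \cong V_1^{\otimes d} \otdot V_n^{\otimes d},\]
an isomorphism of $GL(V_1)\tdt GL(V_n)$-modules which is moreover equivariant for the symmetric group $\mathfrak{S}_d$: on the left $\mathfrak{S}_d$ permutes the $d$ copies of $V$, and under the regrouping this becomes the \emph{diagonal} copy of $\mathfrak{S}_d$ inside $\mathfrak{S}_d^{\times n}$, acting by simultaneously permuting the $d$ factors inside each $V_i^{\otimes d}$.

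Next I would invoke Schur--Weyl duality for each $i$ in the form $V_i^{\otimes d} \cong \bigoplus_{|\pi_i| = d} [\pi_i] \otimes S_{\pi_i} V_i$, an isomorphism of $\mathfrak{S}_d \times GL(V_i)$-modules, where $[\pi_i]$ is the irreducible $\mathfrak{S}_d$-representation (Specht module) indexed by $\pi_i$ and $S_{\pi_i}V_i$ the corresponding Schur module; here $\mathfrak{S}_d$ acts only on the $[\pi_i]$ factor and $GL(V_i)$ only on the $S_{\pi_i}V_i$ factor. Substituting into the regrouped tensor power and distributing the tensor product over the direct sums yields
\[V^{\otimes d} \cong \bigoplus_{|\pi_1| = \dots = |\pi_n| = d}\bigl([\pi_1]\otdot[\pi_n]\bigr)\otimes\bigl(S_{\pi_1}V_1 \otdot S_{\pi_n}V_n\bigr),\]
where, in view of the previous paragraph, $\mathfrak{S}_d$ acts diagonally on the factor $[\pi_1]\otdot[\pi_n]$ and trivially on the Schur-module factor, while $G = GL(V_1)\tdt GL(V_n)$ acts only on the Schur-module factor.

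To conclude I would pass to $\mathfrak{S}_d$-invariants. Over $\CC$ the averaging idempotent $\frac{1}{d!}\sum_{\sigma \in \mathfrak{S}_d}\sigma$ identifies $S^d V = (V^{\otimes d})^{\mathfrak{S}_d}$; since the $G$-action commutes with $\mathfrak{S}_d$, since $(-)^{\mathfrak{S}_d}$ is exact and commutes with direct sums, and since $(M\otimes N)^{\mathfrak{S}_d} = M^{\mathfrak{S}_d}\otimes N$ when $N$ carries the trivial action, applying $(-)^{\mathfrak{S}_d}$ termwise to the last display gives
\[S^d V \cong \bigoplus_{|\pi_1| = \dots = |\pi_n| = d}\bigl([\pi_1]\otdot[\pi_n]\bigr)^{\mathfrak{S}_d}\otimes\bigl(S_{\pi_1}V_1 \otdot S_{\pi_n}V_n\bigr),\]
which is the asserted isotypic decomposition, the multiplicity of $S_{\pi_1}V_1 \otdot S_{\pi_n}V_n$ being $\dim\bigl([\pi_1]\otdot[\pi_n]\bigr)^{\mathfrak{S}_d}$, the number of copies of the trivial representation in the tensor product of Specht modules. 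The one genuinely delicate point is the bookkeeping of the symmetric-group action in the regrouping step: one must verify carefully that the single $\mathfrak{S}_d$ permuting the copies of $V$ becomes the diagonal $\mathfrak{S}_d \subset \mathfrak{S}_d^{\times n}$ on $V_1^{\otimes d}\otdot V_n^{\otimes d}$ — this is exactly what forces all the $\pi_i$ to partition the same integer $d$ and produces the \emph{invariant subspace} of the tensor product of Specht modules, rather than a full tensor product of separate multiplicity spaces.
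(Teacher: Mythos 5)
Your proof is correct and is the standard Schur--Weyl argument; the paper itself gives no proof, simply importing the statement from Landsberg--Manivel \cite{LM04}, where essentially this same derivation (regrouping $V^{\otimes d}$, applying Schur--Weyl duality factor by factor, and passing to $\mathfrak{S}_d$-invariants along the diagonal embedding) is used. The one cosmetic remark is that the paper's statement mixes the indices $k$ and $n$; your version, with $n$ throughout, is the intended one, and your emphasis on the diagonal $\mathfrak{S}_d\subset\mathfrak{S}_d^{\times n}$ is exactly the point that makes the multiplicity space $([\pi_1]\otdot[\pi_n])^{\mathfrak{S}_d}$ rather than a product of separate multiplicities.
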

For more background on this decomposition formula see \cite{LM04} and for more background on representation theory one may consult \cite{FH}.
Note the representation theory for $SL(n)$ and $GL(n)$ is the same up to twists by determinants so we can also use this proposition when $G = SL(V_{1})\tdt SL(V_{n})$.  When $V_{i}$ are all isomorphic to the same $V$, we also have an $\mathfrak{S}_{n}$ action.  In this case, the irreducible $G$-modules for $G = SL(V)^{\times n} \ltimes \mathfrak{S}_{n}$ are direct sums of the modules of the form $S_{\pi_1}V_{1} \otdot S_{\pi_n} V_{n}$ where the $\pi_{i}$'s occur in every order that produces a non-redundant module.  In this case we often drop the superfluous notation of the tensor products and the vector spaces and denote the irreducible $SL(V)^{\times n} \ltimes \mathfrak{S}_{n}$-modules by $S_{\pi_1}\dots S_{\pi_n}$.

We construct polynomials from Schur modules via Young symmetrizers and Young tableau.
This construction is described in detail in \cite{LandsbergMorton} so we do not attempt to repeat its description here, but merely give a brief summary.

The basic idea is that for a partition $\pi$ of an integer $d$, each filled Young tableau of shape $\pi$  provides a recipe for constructing a certain Young symmetrizer, \ie a map $c_{\pi} :  V^{\otimes d} \rightarrow  V^{\otimes d}$ whose image is isomorphic to $S_{\pi}V \subset V^{\otimes d}$.  The map $c_{\pi}$ is defined by skew-symmetrizing over the columns and symmetrizing over the rows of the filled Young tableau of shape $\pi$.  In particular, one can construct a highest weight vector of $S_{\pi}V$ as the image under $c_{\pi}$ of a simple vector in $V^{\otimes d}$ of the correct weight. 

To construct a tensor in a module of degree $d$ polynomials of the from $S_{\pi_1}V_{1} \otdot S_{\pi_n} V_{n}$, we must make a clever combination of choices of fillings of the the Young tableau of shapes $\pi_{i}$ so that the resulting tensor in $S^{d}(V_{1}\otdot V_{n})$ is non-zero.  When $S_{\pi_1}V_{1} \otdot S_{\pi_n} V_{n}$ occurs with multiplicity $m>1$, we must repeat this process until we get $m$ linearly independent vectors to span the highest weight space (which by definition has dimension $m$).

\subsection{Constructing polynomials for the Landsberg--Weyman Conjecture}
We first consider the case $n=4$.  After this, we can build the polynomials for the general case from those in the base case.

Consider the $SL(2)^{\times 4}ltimes \mathfrak{S}_{4}$-module $\bigwedge^{2}\bigwedge^{2}\bigwedge^{2}\bigwedge^{2}$.  This module is one-di\-mensional and is the span of the polynomial
\begin{eqnarray*}
F_{0}=
X^{0000}X^{1111}
-X^{0001}X^{1110}
-X^{0010}X^{1101}
+X^{0011}X^{1100}
\\
-X^{0100}X^{1011}
+X^{0101}X^{1010}
+X^{0110}X^{1001}
-X^{0111}X^{1000}.
\end{eqnarray*}

We pull back $F_{0}$ to $S^{2} \CC^{n} \oplus \CC$ by making the substitution
$X^{I} = t^{n-|I|} \Delta_{I}(A)$,
where $\Delta_{I}(A)$ the principal minors (indexed by $I$) of a symmetric matrix $A = (a_{i,j})$. We find the polynomial
\[
F_{0}(A):=t^{4}\left(
\begin{array}{l}
a_{1,4}^{2}a_{2,3}^{2} + a_{1,3}^{2}a_{2,4}^{2} + a_{1,2}^{2}a_{3,4}^{2} 
\\
- a_{1,2}a_{2,3}a_{3,4}a_{1,4} -a_{1,2}a_{2,4}a_{1,3}a_{3,4} -a_{1,3}a_{2,4}a_{2,3}a_{1,4}
\end{array}\right).
\]
Notice that $F_{0}(A)$ is independent of the diagonal entries of $A$. 

Next, consider the module $S_{2,1}S_{2,1}S_{2,1}S_{2,1}$.  This module occurs with multiplicity $3$ in the decomposition of $S^{3}(V_{1}\otimes V_{2} \otimes V_{3} \otimes V_{4})$.  In order to get a basis of the highest weight space, we alter the fillings of the Young tableau in the standard construction of highest weight vectors via Young symmetrizers.  There are only 2 options for standard fillings in each of the 4 factors: $ \young(12,3)$, $ \young(13,2)$.  Of the possible $2^{4}$ constructions we must find $3$ which are linearly independent.  We found the following three basis vectors of the highest weight space via images of the Young symmetrizers defined by the fillings $T_{\pi_{1}},T_{\pi_{2}},T_{\pi_{3}},T_{\pi_{4}}$  via the recipes below:
\begin{itemize}
\item $T_{\pi_{1}}=T_{\pi_{2}}=T_{\pi_{3}}=T_{\pi_{4}}= \young(12,3)$ 
\begin{eqnarray*}
F_{1}&:=&\big(
X^{0000}X^{1111}
-X^{0001}X^{1110}
-X^{0010}X^{1101}
+X^{0011}X^{1100}
\\ &&
-X^{0100}X^{1011}
+X^{0101}X^{1010}
+X^{0110}X^{1001}
-X^{0111}X^{1000}
\big)2X^{0000}
\end{eqnarray*}

\item $T_{\pi_{1}}=T_{\pi_{2}}=  \young(12,3) ,\;\;\; T_{\pi_{3}}=T_{\pi_{4}}=  \young(13,2)$

\begin{eqnarray*}
F_{2}&:=&(2X^{0000}X^{1100}-2X^{0100}X^{1000})X^{0011} \\ &&
+(-X^{1100}X^{0001}+X^{0101}X^{1000}+X^{0100}X^{1001}-X^{1101}X^{0000})X^{0010}\\&&
+(-X^{0010}X^{1100}+X^{1000}X^{0110}+X^{1010}X^{0100}-X^{1110}X^{0000})X^{0001}\\&&
+(X^{0011}X^{1100}-X^{0111}X^{1000}-X^{0100}X^{1011}+X^{0000}X^{1111})X^{0000}
\end{eqnarray*}
\item $T_{\pi_{1}}=T_{\pi_{3}}= \young(12,3) , \;\;\;T_{\pi_{2}}=T_{\pi_{4}}= \young(13,2)$
\begin{eqnarray*}
F_{3} &:=&(2X^{0000}X^{0101}-2X^{0001}X^{0100})X^{1010}\\&&
+(-X^{0101}X^{0010}+X^{0100}X^{0011}+X^{0110}X^{0001}-X^{0111}X^{0000})X^{1000}\\&&
+(-X^{0101}X^{1000}+X^{0100}X^{1001}+X^{1100}X^{0001}-X^{1101}X^{0000})X^{0010}\\&&
+(X^{0101}X^{1010}-X^{0100}X^{1011}-X^{0001}X^{1110}+X^{0000}X^{1111})X^{0000}
\end{eqnarray*}
\end{itemize}

Notice that $F_{1} = 2X^{0000}F_{0}$.  This is an indication of the fact that the copy of $S_{2,1}S_{2,1}S_{2,1}S_{2,1}$ associated to the highest weight vector $F_{1}$ is in the ideal generated by $\bigwedge^{2}\bigwedge^{2}\bigwedge^{2}\bigwedge^{2}$. 
\subsection{The pullback of the cubic polynomials to $Z_{n}$}
We work on the open set $t\neq 0$,and set $t=1$.  On this set, $F_{0}$ and $F_{1}$ pull back to the same polynomial.  Since we are only working for the set-theoretic result, it suffices to just consider the $3$ copies of the module $S_{2,1}S_{2,1}S_{2,1}S_{2,1}$.

We find the following polynomials on the entries of the matrix $A$:
\begin{eqnarray*}
F1(A) &=&  
4(a_{1, 2}^2a_{3, 4}^2-a_{1, 2}a_{1, 3}a_{2, 4}a_{3, 4}-a_{1, 2}a_{1, 4}a_{2, 3}a_{3, 4}\\
&&+a_{1, 3}^2a_{2, 4}^2-a_{1, 3}a_{1, 4}a_{2, 3}a_{2, 4}+a_{1, 4}^2a_{2, 3}^2)
\\
F_{2}(A) &=&  
4a_{1, 2}^2a_{3, 4}^2-2a_{1, 2}a_{1, 3}a_{2, 4}a_{3, 4}-2a_{1, 2}a_{1, 4}a_{2, 3}a_{3, 4}\\
&&+a_{1, 3}^2a_{2, 4}^2-2a_{1, 3}a_{1, 4}a_{2, 3}a_{2, 4}+a_{1, 4}^2a_{2, 3}^2
\\
F_{3}(A) &=& 
a_{1, 2}^2a_{3, 4}^2-2a_{1, 2}a_{1, 3}a_{2, 4}a_{3, 4}-2a_{1, 2}a_{1, 4}a_{2, 3}a_{3, 4}\\
&&+4a_{1, 3}^2a_{2, 4}^2-2a_{1, 3}a_{1, 4}a_{2, 3}a_{2, 4}+a_{1, 4}^2a_{2, 3}^2
\end{eqnarray*}

We used Maple for the constructions of $F_{0},F_{1},F_{2}$, and $F_{3}$ above. Then we decomposed the ideal generated by $F_{1}(A),F_{2}(A),F_{3}(A)$ in Macaulay2
and got the single prime ideal
\begin{equation}\label{simple ideal}
\langle a_{1,3}a_{2,4}  - a_{1,4}a_{2,3},\;\;  a_{1,2}a_{3,4}- a_{1,4}a_{2,3} \rangle
.\end{equation}
We immediately recognize these equations as special $2\times 2$ minors of the symmetric matrix $A$.  In fact, these minors come from submatrices of $A$ which have no rows or columns in common.  We study such minors in Section \ref{E-rank}.

Next we used Maple to construct a basis of the three copies of the module $S_{2,1}S_{2,1}S_{2,1}S_{2,1}$ coming from the highest weight vectors $F_{1}, F_{2} $ and $F_{3}$. We pulled back these $48$ polynomials to the space of symmetric matrices. Then we decomposed this ideal in Macaulay2, and found that the same ideal as in (\ref{simple ideal}).  

We note that while the polynomials $F_{1}(A),F_{2}(A),F_{3}(A)$ do not depend on the diagonal terms of the matrix $A$, this does not hold for all of the other basis vectors in the module $S_{2,1}S_{2,1}S_{2,1}S_{2,1}$.  However, the radical of the ideal still does not depend on the diagonal terms of $A$.

In the general case, we consider the modules  $S_{2,1}S_{2,1}S_{2,1}S_{2,1}S^{3}\dots S^{3}$.  These modules have the same highest weight vectors (up to permutation) as those we considered in the above example, so the pullback of $S_{2,1}S_{2,1}S_{2,1}S_{2,1}S^{3}\dots S^{3}$ to symmetric matrices  must have (at least) all of the $2\times 2$ minors of the matrix $A$ which have no rows or columns in common in our ideal in the general case.

We have seen that the module 
$\bigwedge^{2}\bigwedge^{2}\bigwedge^{2}\bigwedge^{2}S^{2}\dots S^{2}$,
is not necessary for the set-theoretic question because it gives the same equations on the pull-back as one of the copies of $S_{2,1}S_{2,1}S_{2,1}S_{2,1}S^{3}\dots S^{3}$.  On the other hand, if one were to be interested in the minimal generators of the ideal of $\tS$ this module and all of the modules in degree $2$ would need to be considered.

\section{Exclusive rank}\label{E-rank}
In this section, motivated by the equations we found in the previous section, we study the minors whose row and column sets are disjoint.
We will say a minor $\Delta^{I}_{J}(A)$ is an \emph{exclusive minor} (or \emph{E-minor}) if $I\cap J= \emptyset$. The Laplace expansion expresses a $(k+2)\times (k+2)$ E-minor as a linear combination of $(k+1)\times (k+1)$ E-minors.  Therefore, if all the $(k+1)\times (k+1)$ E-minors vanish, then all the $(k+2)\times (k+2)$ E-minors vanish as well.  In light of this, we define the \emph{exclusive rank} (or \emph{E-rank}) of a matrix to be the minimal $k$ such that all the $(k+1)\times (k+1)$ E-minors vanish.

\begin{prop}\label{fixed}
The E-minors are fixed points under the action of $SL(2)^{\times n}$.  In particular, the E-rank is $G \simeq \G $ invariant.
\end{prop}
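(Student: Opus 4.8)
The plan is to realize the action in a model where the hypothesis $I\cap J=\emptyset$ becomes the statement that certain tensor factors of an exterior power are trivial $SL(2)$-modules. Identify a symmetric matrix $A\in S^{2}\CC^{n}$ with its graph $\Gamma_{A}=\{(v,Av):v\in\CC^{n}\}\subset\CC^{n}\oplus\CC^{n}=\CC^{2n}$; symmetry of $A$ is exactly Lagrangianness of $\Gamma_{A}$ for $\omega\big((v,w),(v',w')\big)=v\cdot w'-v'\cdot w$, so $A\mapsto\Gamma_{A}$ identifies $S^{2}\CC^{n}$ with the big cell of the Lagrangian Grassmannian $\mathrm{LG}(n,2n)$. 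Writing $\CC^{2n}=\CC^{2}_{1}\opdop\CC^{2}_{n}$ with $\CC^{2}_{i}$ spanned by the two $i$-th coordinate vectors, the form $\omega$ is the orthogonal sum of the standard forms on the $\CC^{2}_{i}$, so the block-diagonal embedding $SL(2)^{\times n}\hookrightarrow GL(2n)$ (the $i$-th factor acting on $\CC^{2}_{i}$) lands in $Sp(2n)$, and adjoining $\mathfrak{S}_{n}$ permuting the blocks gives the natural action of $\G\subset GL(2n)$. It acts on $\mathrm{LG}(n,2n)$ and hence rationally on $S^{2}\CC^{n}$, and this is the action making $\varphi$ equivariant. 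With $(I_{n}\mid A)$ a matrix of row-generators of $\Gamma_{A}$, the Plücker coordinate of $\Gamma_{A}$ on an index set $S\subset[2n]$ equals $\pm\det\!\big(A[\,[n]\setminus S_{\mathrm{in}},\,S_{\mathrm{out}}\,]\big)$, where $S_{\mathrm{in}},S_{\mathrm{out}}\subset[n]$ are the members of $S$ in the first and second $\CC^{n}$; on the \emph{coherent} sets (one vector from each $\CC^{2}_{i}$) this recovers $\Delta_{K}(A)$, and the coherent part of $\bigwedge^{n}\CC^{2n}$ is the $\G$-submodule $\bigotimes_{i}\bigwedge^{1}\CC^{2}_{i}\cong V_{1}\otdot V_{n}$, carrying the $\G$-action on $\PP\CC^{2^{n}}$ of the Introduction.

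Now fix disjoint $I,J\subset[n]$ with $|I|=|J|$. The E-minor $\Delta^{I}_{J}(A)=\det A[I,J]$ is the Plücker coordinate on the set $S$ with $S_{\mathrm{in}}=[n]\setminus I$ and $S_{\mathrm{out}}=J$. In this $S$ the block $\CC^{2}_{i}$ contributes $\bigwedge^{0}\CC^{2}_{i}$ for $i\in I$ (neither vector, as $i\notin J$), $\bigwedge^{2}\CC^{2}_{i}$ for $i\in J$ (both vectors, as $i\notin I$), and $\bigwedge^{1}\CC^{2}_{i}$ for $i\notin I\cup J$ — the first two cases occurring precisely because $I\cap J=\emptyset$. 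Since $\bigwedge^{0}\CC^{2}$ is trivial and $\bigwedge^{2}\CC^{2}\cong\CC$ is the determinant, which is trivial on $SL(2)$, the factor $SL(2)_{i}$ fixes $\Delta^{I}_{J}$ for every $i\in I\cup J$; and the transpositions in $\mathfrak{S}_{n}$ carry $\Delta^{I}_{J}$ to $\pm\Delta^{\sigma(I)}_{\sigma(J)}$, again an E-minor. This is the assertion — with the contrast that a principal minor has a $\bigwedge^{1}$ in every block, so is fixed by no $SL(2)_{i}$. The only factors that move $\Delta^{I}_{J}$ are the $SL(2)_{i}$ with $i\notin I\cup J$, and since $g=(g_{i})$ preserves multidegree on $\bigwedge^{n}\CC^{2n}$ one has $\Delta^{I}_{J}(g\cdot A)=\sum_{T\subseteq[n]\setminus(I\cup J)}c_{T}(g)\,\det A[I\cup T,\,J\cup T]$, with $c_{T}(g)$ a polynomial in the entries of $\{g_{i}:i\notin I\cup J\}$ and $c_{\varnothing}(g)=\prod_{i\notin I\cup J}(g_{i})_{11}$.

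Finally, for the ``in particular'' — the $\G$-invariance of the E-rank, which is what gets used — expand each $\det A[I\cup T,\,J\cup T]$ by generalized Laplace along the rows indexed by $T$: it becomes a $\CC$-linear combination of products $\det A[T,C]\cdot\det A[I,(J\cup T)\setminus C]$ over $|T|$-subsets $C\subseteq J\cup T$, and in every term the second factor is an E-minor of size $|I|$, since its rows $I$ are disjoint from its columns $(J\cup T)\setminus C\subseteq J\cup T$ ($I$ being disjoint from both $J$ and $T$). Thus if all $(k+1)\times(k+1)$ E-minors of $A$ vanish, so do all $\det A[I\cup T,\,J\cup T]$ with $|I|=k+1$, hence $\Delta^{I}_{J}(g\cdot A)=0$; combined with the $\mathfrak{S}_{n}$-equivariance above and the Laplace reduction of larger E-minors to $(k+1)\times(k+1)$ ones noted in Section \ref{E-rank}, this shows the locus of matrices of E-rank at most $k$ is $\G$-stable, i.e. the E-rank is a $\G$-invariant. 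The conceptual content — triviality of the ``used'' factors $SL(2)_{i}$, $i\in I\cup J$ — falls out of the $\bigwedge^{0}/\bigwedge^{1}/\bigwedge^{2}$ splitting with no computation; the main obstacle is the bookkeeping: pinning down the Plücker dictionary and its signs, computing the $c_{T}(g)$, and running the Laplace expansion of $\det A[I\cup T,\,J\cup T]$.
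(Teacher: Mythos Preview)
Your argument is correct and uses the same framework as the paper: embed symmetric matrices as Lagrangian subspaces of $\CC^{2n}=\bigoplus_i \CC^2_i$, realize minors as Pl\"ucker coordinates, and exploit that $\bigwedge^0\CC^2$ and $\bigwedge^2\CC^2$ are trivial $SL(2)$-modules. The paper's direct computation --- writing an E-minor as $e_R\wedge f_R$ and acting by a single $SL(2)$ factor whose index lies in $R$ --- is exactly your observation that the factors $SL(2)_i$ with $i\in I\cup J$ (those contributing a $\bigwedge^0$ or $\bigwedge^2$) fix $\Delta^I_J$.

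Where you go further is in treating the remaining factors $SL(2)_i$ with $i\notin I\cup J$. You correctly note that these contribute a $\bigwedge^1\CC^2_i$ and therefore genuinely move the Pl\"ucker coordinate: for example the $1\times 1$ E-minor $a_{1,2}$ of a $4\times 4$ symmetric matrix is \emph{not} fixed by $SL(2)_3$ or $SL(2)_4$. The paper's wedge vector $e_R\wedge f_R$ lives in $\bigwedge^{2|R|}V$, where indices outside $R$ simply do not appear and so act trivially; but the dictionary with minors of $A$ goes through the Pl\"ucker embedding in $\bigwedge^{n}V$, and there those factors do act. Your formula $\Delta^I_J(g\cdot A)=\sum_T c_T(g)\,\det A[I\cup T,\,J\cup T]$ together with the Laplace expansion along the rows $T$ --- showing every term carries an $|I|\times|I|$ E-minor factor --- is precisely the extra step needed to pass from ``fixed by the factors in $I\cup J$'' to ``the E-rank is $\G$-invariant'', which is the statement actually invoked downstream in Lemma~\ref{glemma}. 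So your proof follows the paper's line but supplies a piece of the argument that the paper's exposition leaves implicit.
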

\begin{proof}

First note that it suffices to prove that a E-minor is taken to one of the same size under the action of $\G$.

The $\mathfrak{S}_n$ invariance is clear.  So we need to prove the first statement.  We recall the inherited action of $SL(2)^{\times n}$ as a subgroup of $GL(2n)$.  Seen in this way, we can give a proper definition of the action of $SL(2)^{\times n}$ on the exclusive minors.

Let $V = E \oplus F$ and let $E \simeq F \simeq \CC^{n}$.  The Grassmanian $G(n, V)$ can be parametrized by the rational map,
 \begin{eqnarray*} 
\psi: \PP  ( E^{*}\otimes F\oplus \CC) 
& \dashrightarrow & 
\PP \left(\bigwedge^{n} V\right)=\PP \left(\bigoplus_{k=0}^{n}\left(\bigwedge^{k}E^{*}\otimes\bigwedge^{k}F \right)\right)\\{}
[(A),t] & \longmapsto & \left[\sum_{|R|=|S|}t^{k-|R|}e^{R}\otimes f_{S}(A)\right].\end{eqnarray*}
The map $\psi$ is a variant of the Pl$\ddot{\text{u}}$cker embedding of the Grassmannian, and it is compatible with the decomposition of $\bigwedge^{n} V$. In light of this mapping $\psi$, the Grassmannian $Gr(n,2n)$ has the interpretation as the variety of (vectors of) minors of $n\times n$ matrices.   

For convenience, we will choose a volume form in $\bigwedge^{n}E$ and identify $\bigwedge^{n-k}E^{*}$ with $\bigwedge^{k}E$.  Then we will work with the minors as elements of $\bigwedge^{n-k}E \wedge \bigwedge^{k}F$ -- there is no harm in using a wedge between $E$ and $F$ because the vector spaces intersect only at the origin so we can interchange the tensor symbol with the wedge symbol -- and consider $e_{R} \wedge f_{S}(A) $ the minor of $A$ found by taking the determinant of the submatrix formed by keeping the rows of $A$ indexed by $R^{c}$ and the columns of $A$ indexed by $S$.   In this notation, the principal minors of $A$ are $e_{R} \wedge f_{S}(A)$ with $R \cap S = \emptyset$ and the E-minors of $A$ are $e_{R} \wedge f_{S}(A)$ with $R = S$.

Consider a vector $\sum_{|R|=|S|\geq 1}e_{R} \wedge f_{S}(A) $ of all minors of a given $n\times n$ matrix $A$.  Since this vector is in $G(n,2n)$, we can consider the action of $GL(2n)$ on it, and by the inclusion  $SL(2)^{\times n} \subset GL(2n)$ (given below) we can consider the action of $SL(2)^{\times n}$ on $e_{R} \wedge f_{S}(A) $.  

In \cite{HSt, oeding_thesis, BorodinRains}, it is shown that the action of $SL(2)^{\times n}$ preserves the variety of principal minors of symmetric matrices.  Here we will show that this action fixes the E-minors.

The inclusion we consider is the following.
\begin{align*}
SL(2)^{\times n} = \left\{  \left(\begin{array}{cc} D_{1} & D_{2} \\ D_{3} & D_{4} \end{array} \right) \mid D_{i} -\text{diagonal}, D_{1}D_{3}- D_{2}D_{4} = I_{n} \right\} 
\\
\subset  \left\{ M \in  \left( \begin{array}{cc} E^{*}\otimes E  & F^{*} \otimes E \\ E^{*} \otimes F & F^{*}\otimes F  \end{array}  \right) \mid det(M) \neq 0 \right \} = GL(V).
\end{align*}
Consider the blocked matrix $g =\left(\begin{array}{cc} a^{i}_{i} & b^{j}_{j} \\ c^{k}_{k} & d^{l}_{l} \end{array} \right) \in SL(2)^{\times n}$ with $1\leq i,j,k,l \leq n$. The individual elements of each $SL(2)$ are the $2\times 2$ matrices constructed from $g$ as $\left(\begin{array}{cc} a^{i}_{i} & b^{i}_{i} \\ c^{i}_{i} & d^{i}_{i} \end{array} \right)$.
For simplicity, let all factors of $g$ except the first factor be the identity matrix and consider the action on a exclusive minor
\[
g. e_{R}\otimes f_{R} = g.\big(e_{i_{1}}\wedge e_{i_{2}} \wedge \dots \wedge e_{i_{|R|}} \wedge f_{i_{1}}\wedge \dots \wedge f_{i_{|R|}} \big)
\]
\[
= \big((a_{i_{1}}^{i_{1}}e_{i_{1}} + c^{i_{1}}_{i_{1}} f_{i_{1}} )\wedge e_{i_{2}} \wedge \dots \wedge e_{i_{|R|}} \wedge ( b^{i_{1}}_{i_{1}} e_{i_{1}} + d^{i_{1}}_{i_{1}}f_{i_{1}} )\wedge f_{i_{1}}\wedge \dots \wedge f_{j_{|S|}} \big)
.\]

But if we expand this expression, and use the fact that $e_{i_{1}}\wedge e_{i_{1}} = f_{i_{1}}\wedge f_{i_{1}} = 0$ we see that the only nonzero term is 

\[
= (a_{i_{1}}^{i_{1}} c_{i_{1}}^{i_{1}} - b_{i_{1}}^{i_{1}} d_{i_{1}}^{i_{1}}) e_{R}\wedge f_{R} = e_{R}\wedge f_{R} 
.\]
Therefore the exclusive minors are fixed by $SL(2)^{\times n}$. 
%%%
%
\end{proof}

\section{Principal minors of symmetric matrices with small exclusive rank}\label{E-rank01}

In this section we study the symmetric matrices that have E-rank less than or equal to one and their principal minors.  The main goal of this section is Proposition \ref{E-rank one}, which is the key to the proof of Theorem \ref{main theorem}.  First we consider the case of E-rank zero symmetric matrices.  To prove Proposition \ref{E-rank one} we first consider the principal minors of honest rank one symmetric matrices in Proposition \ref{veronese}.  We show that the $\G$ orbit of rank one symmetric matrices is $\tS$.  Then we show that the variety of principal minors of E-rank one symmetric matrices is the tangential variety by showing that it is $\G$-invariant, irreducible, and has the same dimension as the orbit of principal minors of the the honest rank one symmetric matrices.  The $\G$-invariance comes from Lemma \ref{equivariant} which is a general statement about how symmetry can be preserved under a projection from a $G$-variety.

\begin{prop}\label{diagonal}\cite{oeding_thesis}.
Let $U_{0}= \{[z_{I}X^{I}] \in \PP(V_{1}^{*}\otdot V_{n}^{*}) \mid z_{[0,\dots,0]} \neq 0 \}$.  $\varphi([A,t])\in Seg(\PP V_{1}^{*}\tdt \PP V_{n}^{*} ) \cap U_{0}$, if and only if $A$ is diagonal (has E-rank $0$).
\end{prop}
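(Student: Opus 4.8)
The plan is to work on the open set where $t\neq 0$, on which $\varphi$ is defined, and there to normalize $t=1$ using the homogeneity of $\varphi$. Since the $X^{0\cdots 0}$-coordinate of $\varphi([A,t])$ equals $t^{n}\Delta_{\emptyset}(A)=t^{n}\neq 0$ (with the convention $\Delta_{\emptyset}=1$), membership in $U_{0}$ is automatic; hence the substance of the statement is the equivalence
\[
\varphi([A,1])\in Seg(\PP V_{1}^{*}\tdt \PP V_{n}^{*})
\quad\Longleftrightarrow\quad
A \text{ is diagonal}.
\]
I also observe at the outset that the $1\times 1$ E-minors of $A$ are precisely the off-diagonal entries $\Delta^{\{j\}}_{\{k\}}(A)=a_{jk}$ with $j\neq k$, so that ``$A$ has E-rank $0$'' means exactly ``$A$ is diagonal''; thus the parenthetical assertion in the statement requires no separate argument.

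For the implication from right to left, write $A=\operatorname{diag}(\lambda_{1},\dots,\lambda_{n})$, so that $\Delta_{I}(A)=\prod_{k\,:\,i_{k}=1}\lambda_{k}$ for every binary multi-index $I$. Using $n-|I|=\#\{k:i_{k}=0\}$ one computes
\[
\varphi([A,t])=\Bigl[\,\sum_{I}\Bigl(\prod_{k\,:\,i_{k}=0}t\Bigr)\Bigl(\prod_{k\,:\,i_{k}=1}\lambda_{k}\Bigr)X^{I}\,\Bigr]=\Bigl[\,\bigotimes_{k=1}^{n}\bigl(t\,x_{k}^{0}+\lambda_{k}x_{k}^{1}\bigr)\,\Bigr],
\]
a decomposable tensor, hence a point of the Segre variety. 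For the converse, suppose the natural representative $T:=\sum_{I}\Delta_{I}(A)X^{I}$ of $\varphi([A,1])$ is decomposable, $T=w_{1}\otdot w_{n}$ with $w_{k}=\alpha_{k}x_{k}^{0}+\beta_{k}x_{k}^{1}$ (absorbing the overall scalar into $w_{1}$). Comparing $X^{0\cdots 0}$-coefficients gives $\prod_{k}\alpha_{k}=\Delta_{\emptyset}(A)=1$, so each $\alpha_{k}\neq 0$; after replacing $w_{k}$ by $\alpha_{k}^{-1}w_{k}$ (which does not change $T$) we may assume $T=\bigotimes_{k=1}^{n}(x_{k}^{0}+\lambda_{k}x_{k}^{1})$ with $\lambda_{k}=\beta_{k}/\alpha_{k}$, that is, $\Delta_{I}(A)=\prod_{k\,:\,i_{k}=1}\lambda_{k}$ for all $I$. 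Taking $I=\{k\}$ yields $a_{kk}=\lambda_{k}$, and taking $I=\{j,k\}$ with $j\neq k$ yields $a_{jj}a_{kk}-a_{jk}^{2}=\lambda_{j}\lambda_{k}=a_{jj}a_{kk}$, hence $a_{jk}=0$; so $A$ is diagonal. (Alternatively one can bypass the normalization and use that the Segre variety is cut out by the $2\times 2$ minors of the flattenings: evaluating the minor $z_{\{j\}}z_{\{k\}}-z_{\emptyset}z_{\{j,k\}}$ at $\varphi([A,1])$ gives $a_{jj}a_{kk}-\bigl(a_{jj}a_{kk}-a_{jk}^{2}\bigr)=0$, i.e. $a_{jk}=0$.)

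I do not anticipate a genuine obstacle: the whole argument is a short computation with $1\times 1$ and $2\times 2$ principal minors. The only points that demand care are the bookkeeping of the powers of $t$ in the definition of $\varphi$ (together with the convention $\Delta_{\emptyset}=1$) and, in the converse direction, either the choice of one convenient quadratic relation among those defining the Segre variety, or equivalently the normalization of a decomposable tensor whose $X^{0\cdots 0}$-coordinate is nonzero. As a byproduct, the left-to-right construction shows that $\varphi$ maps the diagonal matrices (with $t\neq 0$) onto all of $Seg(\PP V_{1}^{*}\tdt \PP V_{n}^{*})\cap U_{0}$, exhibiting this affine chart of the Segre variety inside $Z_{n}$.
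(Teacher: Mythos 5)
Your proof is correct and follows essentially the same route as the paper: both directions come down to comparing the $0\times0$, $1\times1$, and $2\times2$ principal minors of $A$ with the coefficients of a decomposable tensor whose $X^{0\cdots0}$-coordinate is nonzero. The only differences are cosmetic — you normalize $t=1$ and write the converse directly as $\bigotimes_{k}(t\,x_{k}^{0}+\lambda_{k}x_{k}^{1})$, whereas the paper carries the parameters $a^{i},b^{i},t$ throughout.
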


\begin{proof}
Let $\{x_{i}^{0},x_{i}^{1}\}$ be a basis of $V_{i}^{*}$ for each $i$.  Let $z = (a^{1}x_{1}^{0}+ b^{1}x_{1}^{1})\otdot (a^{n}x_{n}^{0}+ b^{n}x_{n}^{1}) $ be such that $[z]\in Seg(\PP V_{1}^{*}\tdt \PP V_{n}^{*} ) \cap U_{0}$ and suppose $A$ is a matrix such that $\varphi([A,t]) = [z]$.
The following relations on the $0\times 0$ and $1\times 1$ principal minors of $A = (x_{i,j})$ must hold:  \[t^{n}= (a^{1}\dots a^{n}) = z_{[0,\dots,0]}\]
\[ t^{n-1}x_{i,i} = (a^{1}\dots a^{i-1} b^{i} a^{i+1}\dots a^{n}) = z_{[0,\dots,0,1,0,\dots,0]}.\]
We are assuming that $z_{[0,\dots,0]}\neq 0$, so this implies that $a^{i} \neq 0 \;\forall i$ and that $t\neq 0$, so we can solve these equations to find $x_{i,i} = \frac{b^{i}}{a^{i}} t$.
Also, the following relation on $2\times 2$ minors must hold,
\[ t^{n-2}(x_{i,i}x_{j,j} - x_{i,j}^{2}) = (a^{1}\dots a^{i-1} b^{i} a^{i+1}\dots a^{j-1}b^{j} a^{j+1}\dots a^{n} ),\]
which implies that $x_{i,j}=0$ for all $i\neq j$.  Therefore $A$ must be a diagonal matrix.

For the converse, suppose $A = (x_{i,j})$ is diagonal, we must show that $\varphi([A,1]) \in Seg(\PP V_{1}^{*} \tdt \PP V_{n}^{*})$.  We can further suppose that $x_{i,i}$ are of the form $x_{i,i} = \frac{b^{i}}{a^{i}} t$ for some constants $b^{i}$, $a^{i}$ and $t$ with the $a^{i}$, assumed to be nonzero and $t^{n} = a^{1}\dots a^{n}$.
% and that we have chosen $[z]\in Seg(\PP V_{1} \tdt \PP V_{n})$ as above so that the $1\times 1$ and $2\times 2$ minors of $A$ agree with $z$. We must ensure that this restriction that $A$ is diagonal does not violate the condition $\varphi(A,t) = [z]$. The equations on the $k\times k$ principal minors pose no problems: Use the fact that we know that $A$ is diagonal and calculate the principal minor 
Because $A$ is assumed diagonal, its principal minors are easy to calculate: Let $I(p)$ is a multi-index with $1$'s in the positions $p_{1},\dots,p_{k}$ and $0$'s elsewhere, then 
\begin{align*}
t^{n-k}\Delta_{I(p)}(A) = t^{n-k} x_{p_{1},p_{1}}\dots x_{p_{k},p_{k}} \\
=  (\frac{b^{p_{1}}}{a^{p_{1}}} t) \dots (\frac{b^{p_{k}}}{a^{p_{k}}} t) =  (\frac{b^{p_{1}} \dots b^{p_{k}}}{a^{p_{1}}\dots a^{p_{k}}} t^{n})\\
=  (\frac{b^{p_{1}} \dots b^{p_{k}}}{a^{p_{1}}\dots a^{p_{k}}} a^{1}\dots a^{n}).
\end{align*}
But the term $ (\frac{b^{p_{1}} \dots b^{p_{k}}}{a^{p_{1}}\dots a^{p_{k}}} a^{1}\dots a^{n})$ is the $I(p)$ coefficient of the expansion of the tensor, $z = (a^{1}x_{1}^{0}+ b^{1}x_{1}^{1})\otdot (a^{n}x_{n}^{0}+ b^{n}x_{n}^{1}) $, so we have $t^{n-k}\Delta_{I(p)}(A) = z_{I(p)}$ and $[z] \in Seg(\PP V_{1}^{*}\tdt \PP V_{n}^{*} ) \cap U_{0}$ as required. 
\end{proof}

\begin{prop}\label{E-rank one}
The tangential variety is the image of the E-rank one symmetric matrices under the principal minor map.
\end{prop}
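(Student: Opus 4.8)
The plan is to show that the image $Y:=\overline{\varphi(\widehat W)}$ coincides with $\tS$, where $\widehat W\subseteq\PP(S^{2}\CC^{n}\oplus\CC)$ is the set of pairs $[A,t]$ with $A$ a symmetric matrix of E-rank at most one. The crux is to understand the variety $W\subseteq S^{2}\CC^{n}$ of such matrices. As in the definition of E-rank, $A$ has E-rank $\le1$ exactly when all its $2\times2$ E-minors vanish, i.e. $a_{ij}a_{kl}=a_{ik}a_{jl}=a_{il}a_{jk}$ for every four distinct indices $i,j,k,l$; this is a ``rank $\le1$'' condition on the off-diagonal entries, the diagonal being unconstrained. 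I would prove that $W$ is the closure of the family
\[
\mathcal F\;=\;\bigl\{\,D+\Lambda\Lambda^{T}\;:\;D\ \text{diagonal},\ \Lambda\in\CC^{n}\,\bigr\},
\]
whose off-diagonal entries are $a_{ij}=\lambda_{i}\lambda_{j}$: the inclusion $\mathcal F\subseteq W$ is immediate; on the open locus where all off-diagonal entries are nonzero one solves the quadratic relations above directly for the $\lambda_{i}$, so this locus lies in $\mathcal F$; and every other E-rank $\le1$ matrix is a limit of members of $\mathcal F$ (degenerate the $\lambda_{i}$). Since $(D,\Lambda)\mapsto D+\Lambda\Lambda^{T}$ has finite generic fibres, $W=\overline{\mathcal F}$ is irreducible of dimension $2n$; for $n=4$ this is exactly the primality of the ideal (\ref{simple ideal}). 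In particular $\widehat W$, and hence $Y$, is irreducible, and $\dim Y\le\dim\widehat W=2n$.

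The second step is to evaluate $\varphi$ on $\mathcal F$. Writing a member as $A=D+\Lambda\Lambda^{T}$ with $D=\mathrm{diag}(d_{1},\dots,d_{n})$, the matrix determinant lemma gives, for each $I$,
\[
\Delta_{I}(A)\;=\;\prod_{i\in I}d_{i}\;+\;\sum_{k\in I}\lambda_{k}^{2}\prod_{i\in I\setminus\{k\}}d_{i}.
\]
Reading off the coefficients of the $X^{I}$, this says precisely that $\varphi([A,t])=[\,z_{0}+\theta\,]$, where $z_{0}:=\varphi([D,t])$ is the decomposable tensor with $i$-th factor $tx_{i}^{0}+d_{i}x_{i}^{1}$, a point of $Seg(\PP V_{1}^{*}\tdt\PP V_{n}^{*})$ by Proposition \ref{diagonal}, and where $\theta$ is obtained from $z_{0}$ by summing over $k$ the tensor in which the $k$-th factor $tx_{k}^{0}+d_{k}x_{k}^{1}$ is replaced by $\lambda_{k}^{2}x_{k}^{1}$; thus $\theta$ is a tangent vector to the cone over $Seg(\PP V_{1}^{*}\tdt\PP V_{n}^{*})$ at $z_{0}$, depending linearly on $(\lambda_{1}^{2},\dots,\lambda_{n}^{2})$.

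Both inclusions now follow. Since $[z_{0}+\theta]$ lies on an embedded tangent line to $Seg(\PP V_{1}^{*}\tdt\PP V_{n}^{*})$ at $[z_{0}]$, we have $\varphi(\mathcal F)\subseteq\tS$; as $\mathcal F$ is dense in $W$ and $\tS$ is closed, this gives $Y\subseteq\tS$. Conversely, as $D$ and $t$ vary the point $z_{0}$ sweeps out a dense subset of $Seg(\PP V_{1}^{*}\tdt\PP V_{n}^{*})$ (Proposition \ref{diagonal}), and for each such $z_{0}$, as $\Lambda$ varies the points $[z_{0}+\theta]$ fill a dense subset of the embedded projective tangent space to $Seg(\PP V_{1}^{*}\tdt\PP V_{n}^{*})$ at $[z_{0}]$; since $\tS$ is the union of these tangent spaces over all points of $Seg(\PP V_{1}^{*}\tdt\PP V_{n}^{*})$, we conclude $\tS\subseteq Y$. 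Hence $Y=\tS$, which is the claim of Proposition \ref{E-rank one}. (Alternatively, one can bypass the second inclusion by the route sketched in the introduction: $Y$ is $\G$-invariant by Proposition \ref{fixed} together with Lemma \ref{equivariant}; $Y$ contains $\varphi$ of the honest rank one symmetric matrices, hence their $\G$-orbit closure, which is $\tS$ by Proposition \ref{veronese}; combined with irreducibility and $\dim Y\le2n=\dim\tS$ this again gives $Y=\tS$.)

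The step I expect to be the main obstacle is the first one: identifying $W$ with the irreducible $2n$-dimensional variety $\overline{\mathcal F}$, i.e. showing that a symmetric matrix all of whose off-diagonal $2\times2$ minors vanish necessarily degenerates to one with off-diagonal entries of product form $\lambda_{i}\lambda_{j}$, with no spurious extra components appearing for large $n$. The case $n=4$, where this reduces to the primality of (\ref{simple ideal}) verified by computer, shows the shape of the answer, but the general structural fact about exclusive-rank-one matrices --- which is what powers the whole proof --- requires a genuine argument.
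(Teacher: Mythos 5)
The skeleton of your argument is sound, and its second half is genuinely different from (and more direct than) the paper's: instead of computing $\varphi$ only on honest rank-one matrices and then invoking $\G$-invariance of $\varphi(X)$ (via the Lagrangian Grassmannian, Proposition \ref{fixed} and Lemma \ref{equivariant}), irreducibility, and a dimension count as in Proposition \ref{veronese} and Lemma \ref{glemma}, you evaluate all principal minors of $D+\Lambda\Lambda^{T}$ by the matrix determinant lemma and identify $\varphi([D+\Lambda\Lambda^{T},t])$ explicitly as a point $[z_{0}+\theta]$ of an embedded tangent space to the Segre; that computation is correct and yields both inclusions without any equivariance machinery.

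However, there is a genuine gap, and it sits exactly where you say it does: the identification of the E-rank $\leq 1$ locus $W$ with $\overline{\mathcal{F}}$, where $\mathcal{F}=\{D+\Lambda\Lambda^{T}\}$. Your argument for it consists of solving for the $\lambda_{i}$ on the locus where all off-diagonal entries are nonzero and then asserting that every other E-rank $\leq 1$ matrix is a limit of members of $\mathcal{F}$; the latter is a restatement of what must be proved (no spurious components supported where some off-diagonal entries vanish), and the computer verification of the primality of (\ref{simple ideal}) covers only $n=4$. This containment $W\subseteq\overline{\mathcal{F}}$ is indispensable for the inclusion $\varphi(X)\subseteq\tS$, which is the direction actually needed for Theorem \ref{main theorem}; without it your argument only gives $\tS\subseteq\overline{\varphi(X)}$. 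The paper closes precisely this gap in Claim 2 of Lemma \ref{glemma}: an induction on the size of the matrix, playing the $2\times 2$ E-minors against the last column to force proportionality of column vectors, shows that every E-rank $\leq 1$ symmetric matrix is (not merely a limit of, but actually) of the form $D+\Lambda\Lambda^{T}$, with the degenerate case in which the relevant off-diagonal entries vanish handled separately over $\CC$. Supplying such an induction, or an equivalent structural argument valid for all $n$, is what your proposal still needs; everything downstream of it is fine. (Your parenthetical alternative coincides with the paper's actual proof of Proposition \ref{E-rank one}, but it too relies on the irreducibility and dimension of $W$, i.e., on the same missing fact.)
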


To prove Proposition \ref{E-rank one}, we will use Proposition \ref{veronese} and Lemma \ref{glemma} below.
\begin{remark}
Though it is not necessary for this paper, it would be interesting to have a similar geometric description of the principal minors of the E-rank $k$ symmetric matrices for all $k$.  This would provide a geometric stratification of $Z_{n}$ by E-rank and would enhance our understanding of the geometry of $Z_{n}$.
\end{remark}

%%%%%%%% Excerpted from thesis Below %%%%%%%%%
Consider the Veronese embedding of $\CC^{n}$ into the $n\times n$ matrices.
\begin{eqnarray*}
v_{2}: \CC^n & \longrightarrow &  S^{2} \CC^{n}\\
(y_{1},y_{2}, \dots ,y_{n}) & \longmapsto & 
\left(\begin{array}{cccc}
y_{1}^{2} & y_{2}y_{1} &  \dots  & y_{n}y_{1}\\
y_{1}y_{2} & y_{2}^{2} &  \dots  & y_{n}y_{2}\\
\vdots & \vdots & \ddots & \vdots\\
y_{1}y_{n} & y_{2}y_{n} &  \dots  & y_{n}^2\end{array}\right)
=\mathbf{y}.^{t}\mathbf{y}.\end{eqnarray*}
This parameterizes the rank one complex symmetric $n\times n$ matrices.
\begin{prop}\label{veronese}
 The $G$-orbit of the image (under $\varphi$) of the rank one symmetric matrices is the tangential variety to the $n$-factor Segre variety.  In particular, $\tS\subset Z_{n}.$
\end{prop}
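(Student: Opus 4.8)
The plan is to make the principal minor map $\varphi$ completely explicit on the rank one symmetric matrices, to recognise its image as a Zariski‑dense subset of the embedded tangent space to $Seg(\PP V_1^*\tdt\PP V_n^*)$ at one distinguished point, and then to spread this out over the whole Segre variety using the fact that $G=SL(V_1)\tdt SL(V_n)$ acts transitively on $Seg(\PP V_1^*\tdt\PP V_n^*)$.

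First I would compute $\varphi$ on $A=v_2(\mathbf y)$ with $\mathbf y=(y_1,\dots,y_n)$, so that the $(i,j)$ entry of $A$ is $y_iy_j$. Since every square submatrix of size at least two of a rank one matrix is singular, $\Delta_I(A)=1$ when $|I|=0$, $\Delta_I(A)=y_i^2$ when $I$ has its only $1$ in position $i$, and $\Delta_I(A)=0$ when $|I|\ge 2$. Writing $X^{(i)}:=x_1^0\otimes\dots\otimes x_i^1\otimes\dots\otimes x_n^0$ and working on the chart $t=1$,
\[
\varphi([A,1])=\left[\,X^{0\cdots0}+\sum_{i=1}^{n}y_i^2\,X^{(i)}\,\right].
\]
On the other hand, the affine tangent space $\widehat T$ to the cone over $Seg(\PP V_1^*\tdt\PP V_n^*)$ at $\widehat p_0:=X^{0\cdots0}$ is the span of $\widehat p_0$ together with $X^{(1)},\dots,X^{(n)}$ (these arise by differentiating $s\mapsto(x_1^0+sz_1x_1^1)\otdot(x_n^0+sz_nx_n^1)$ at $s=0$). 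Thus each $\varphi([A,1])$ above lies on an embedded tangent line to the Segre through $p_0:=[\widehat p_0]$, so $\varphi$ of the rank one symmetric matrices is contained in $T:=\PP\widehat T\subseteq\tS$, the embedded (linearly embedded $\PP^n$) tangent space to the Segre at $p_0$. Moreover, since $z\mapsto z^2$ is surjective on $\CC$, as $\mathbf y$ varies the points $[1:y_1^2:\dots:y_n^2:0:\dots:0]$ are Zariski‑dense in $T$, so $\overline{\varphi(\{\text{rank one}\})}=T$. (Restricting to $t\neq 0$ loses nothing: for rank one $A$ the only coordinate of $\varphi([A,t])$ that could survive at $t=0$ is $\det A=0$.)

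Next I would bring in the group action. Each factor $SL(2)$ acts transitively on $\PP^1$, so $G$ acts transitively on $Seg(\PP V_1^*\tdt\PP V_n^*)$; and as $G$ acts by linear automorphisms preserving the Segre, $g$ carries $T$ to the embedded tangent space at $g\cdot p_0$ for every $g\in G$. The Segre being smooth, $\tS$ is the closed union of the embedded tangent spaces at its points, so
\[
\tS=\bigcup_{g\in G}g\cdot T=G\cdot\overline{\varphi(\{\text{rank one symmetric matrices}\})},
\]
which is the first assertion (and the $G$-orbit of $\varphi(\{\text{rank one}\})$ itself is then Zariski‑dense in $\tS$). Finally, $Z_n$ is closed and stable under the $G$-action coming from the inclusion $SL(2)^{\times n}\subset GL(2n)$ of Proposition \ref{fixed} (see \cite{HSt,BorodinRains,oeding_thesis}), and $\varphi(\{\text{rank one}\})\subseteq Z_n$ by the definition of $Z_n$; hence $\tS=G\cdot\overline{\varphi(\{\text{rank one}\})}\subseteq Z_n$.

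The minor computation and the density‑and‑closure bookkeeping are routine. The one step that genuinely needs attention is the interplay between the two a priori different $SL(2)^{\times n}$-actions at work here: the ``tensor'' action on $\PP\CC^{2^n}$, under which $Seg(\PP V_1^*\tdt\PP V_n^*)$ and $\tS$ are manifestly invariant, and the action on $Z_n$ coming from $SL(2)^{\times n}\subset GL(2n)$ that moves symmetric matrices around. The content of the cited equivariance results is precisely that these two actions coincide --- equivalently, that the tensor action preserves $Z_n$ --- and it is this compatibility that allows the identity $\tS=G\cdot\overline{\varphi(\{\text{rank one}\})}$ to be read inside both pictures simultaneously, and hence yields $\tS\subseteq Z_n$.
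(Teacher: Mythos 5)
Your argument is correct, and its computational core coincides with the paper's: evaluate $\varphi$ on $v_{2}(\mathbf{y})$, note that all principal minors of size at least two vanish, and recognize the resulting points inside the tangent picture at $[X^{0\cdots 0}]$, then sweep with the $\G$-action. Where you genuinely differ is in the reverse inclusion $\tS\subset G\cdot\varphi(\{\text{rank one}\})$: the paper normalizes an arbitrary point $Q\in\tS$ by an $SL(2)^{\times n}$ change of basis and then solves $t^{n}=r_{0}$, $r_{i}=y_{i}^{2}t^{n-1}$ to exhibit $Q$ as $\varphi([\mathbf{y}\,{}^{t}\mathbf{y},t])$, whereas you identify the image as a Zariski-dense subset of the full embedded tangent space $T=\PP\widehat{T}_{p_{0}}$ and then use transitivity of $G$ on the Segre together with the fact that $\tS=\bigcup_{p}\PP\widehat{T}_{p}$. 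Your packaging is cleaner and more geometric, at the harmless cost of proving the equality only after taking the Zariski closure of the image (which you flag, and which is all that the application in Proposition \ref{E-rank one} requires, since there one compares irreducible closed varieties of the same dimension); note that the paper's normalization has the mirror-image soft spot at points with $r_{0}=0$, where $t^{n}=r_{0}$ admits no allowable solution, so neither argument literally covers those boundary points without an additional orbit observation (for instance that a pure tangent-direction point with vanishing coefficient on the base Segre point lies in the same $SL(2)^{\times n}$-orbit as one with nonzero coefficient). Finally, your explicit treatment of the ``in particular $\tS\subset Z_{n}$'' clause, via the closedness of $Z_{n}$ and its invariance under the tensor action of $SL(2)^{\times n}$ as in \cite{HSt,oeding_thesis,BorodinRains}, supplies a step the paper leaves implicit, and you correctly isolate the compatibility of the two $SL(2)^{\times n}$-actions as the point that makes this legitimate.
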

\begin{proof}
%Let $Y= \{\varphi([\mathbf{y}.^{t}\mathbf{y},t]) \mid \mathbf{y}.^{t}\mathbf{y}\in v_{2}(\CC^{n})\}$.  
For this proof only, let $Y:=\varphi(\PP (v_{2}(\CC^{n}))\oplus \CC)$.
We want to show that $G.Y = \tS$.

Since $\mathbf{y}.^{t}\mathbf{y}$ is a rank one symmetric matrix, all $k\times k$ minors vanish for $k>1$, and in particular, the $k\times k$ principal minors vanish for $k>1$.  Therefore a generic point in $Y$ has the form
\[
P=\left[ t\left(x_{1}^{0}\otdot x_{n}^{0}\right) + \sum_{i=1}^{n}y_{i}^{2}\left(x_{1}^{0}\otdot x_{i-1}^{0}\otimes x_{i}^{1}\otimes x_{i+1}^{0} \dots \otimes x_{n}^{0}\right)  \right]
,\]
where  $y_{i},t\in\CC$.  Consider the a curve 
\[\gamma(s) = x_{1}(s) \otdot x_{n}(s),\;\; s\in \CC
\]
such that $x_{i}(0) = x_{i}^{0}$ and the derivatives $x_{i}'(0) = x_{i}^{1}$. Then it is clear that $\gamma$ is a curve in $Seg(\PP V_{1}^{*}\tdt \PP V_{n}^{*})$ through $x_{1}^{0}\otdot x_{n}^{0}$, and that $P$ is on the tangent line to $\gamma$ at $s=0$.  So $P \in \tS$ and therefore  $Y \subset \tS$, and $\G.Y\subset \tS$ by the $\G$-invariance of $\tS$.
%Then we take Zariski closure to conclude that $\overline{G.Y} \subset \tS$.

In the other direction, suppose we are given an arbitrary point 
\[\begin{array}{r}Q= \big[r_{0} (q_{1}\otdot q_{n})+\sum_{i}r_{i}(q_{1}\otdot q_{i-1}\otimes q_{i}'\otimes q_{i+1}\otdot q_{n}) \big]
\\ \in \tS
\end{array},\]  
where $r_{i}\in \CC$ for $0\leq i \leq n$ not all zero, and (without loss of generality), each pair $q_{i}, q_{i}'$ is a linearly independent pair so that $ \{q_{i}, q_{i}'\} =  V_{i}^{*}$. The form of $Q$ is generic up to the action of $\G$ so by changing basis on each $V_{i}$ by an $SL(2)$ action, we can assume $x_{i}^{0}=q_{i}$ and $x_{i}^{1}=q_{i}'$ for each $i$.

%%Let $\tilde{Q}$ be another point in $\tS$ constructed by taking the expression for $Q$ replacing $q_{i}'$ with $\frac{q_{i}'}{y_{i}^{2}}$ for $y_{i}^{2} \in \CC$ nonzero constants.  Then $Q\in \tS$ if and only if $\tilde{Q}\in \tS$.

%Then there exist $a^{i}, b^{i}, c^{i}, d^{i}, y_{i}\in \CC$ so that $q_{i} = a^{i}x_{i}^{1} + c^{i}x_{i}^{2}$ and $q_{i}' = y_{i}^{2}(b^{i}x_{i}^{1} + d^{i}x_{i}^{2})$.

%Then the $i^{th}$ change of basis matrix has determinant $y_{i}^{2}(a^{i}d^{i}-b^{i}c^{i})$.  
%Then by choosing the nonzero scalars $y_{i}^{2} = \frac{1}{a^{i}c^{i}-b^{i}d^{i}}$, we can assume that each change of basis matrix is in $SL(2)$.

%Then $Q$ takes the form

%\begin{eqnarray*}
%Q = &\big[ \left(a^{1}x_{1}^{1}+c^{1}x_{1}^{2}\right)\otdot\left(a^{n}x_{n}^{1}+c^{n}x_{n}^{2}\right)  \\{}
%&+  \sum_{i}y_{i}^{2}\left(a^{1}x_{1}^{1}+c^{1}x_{2}^{1}\right)\otdot\left(a^{i-1}x_{i-1}^{1}+c^{i-1}x_{i-1}^{2}\right)\otimes \\
% &\left(b^{i}x_{i}^{1}+d^{i}x_{i}^{2}\right)\otimes\left(a^{i+1}x_{i+1}^{1}+c^{i+1}x_{i+1}^{2}\right)\otdot\left(a^{n}x_{n}^{1}+c^{n}x_{n}^{2}\right) \big].
%, \end{eqnarray*}
%and we have expressed $Q$ in $\G$ orbit of a point

So $Q$ is in the $\G$-orbit of a point of the form
\[
P=\left[ r_{0}(x_{1}^{0}\otdot x_{n}^{0}) + \sum_{i=1}^{n} r_{i}\left(x_{1}^{0}\otdot x_{i-1}^{0}\otimes x_{i}^{1}\otimes x_{i+1}^{0} \dots \otimes x_{n}^{0}\right)  \right]
,\]
which is the image under $\varphi$ of the point $[\mathbf{y}.^{t}\mathbf{y},t]$,
%\[
%\left[
%\left(\begin{array}{cccc}
%y_{1}^{2} & y_{2}y_{1} &  \dots  & y_{n}y_{1}\\
%y_{1}y_{2} & y_{2}^{2} &  \dots  & y_{n}y_{2}\\
%\vdots & \vdots & \ddots & \vdots\\
%y_{1}y_{n} & y_{2}y_{n} &  \dots  & y_{n}^2\end{array}\right)
%,t\right],\]
where $t$ and $y_{i}$ are chosen such that $t^{n}=r_{0}$ and  $r_{i} = y_{i}^{2}t^{n-1}$.
This implies that $$\tS \subset \G.Y.$$  Therefore $\G.\varphi(v_2(\PP^n)) = \tS $. 
\end{proof}

%%%%%%%% Excerpted from thesis Above %%%%%%%%%

The following lemma will be used in the proof of Lemma \ref{glemma} below.
%%%%%%%%%%%%%%%%%%%%%%%%%%%%%%From Thesis Belos %%%%%%%%%%%%%%%%%%%%%%%%%%%
\begin{lemma}\label{equivariant}
Let $T$ be a $G$-module and let $X\subset \PP T$ be a $G$-variety.  Let $H<G$ be a subgroup which splits $T$ - \emph{i.e.} $T=W\oplus W^{c}$ as an $H$-module.  Let $\pi : \PP(W \oplus W^{c}) \dashrightarrow \PP(\left(W\oplus W^{c}\right) / W^{c}) \simeq \PP W$ be the projection map.   The map $\pi$ is obviously $H$-equivariant, so the image $\pi(X)$ is an $H$-invariant subvariety of $\PP W$. 
\end{lemma}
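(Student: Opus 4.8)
The plan is to reduce the statement to the elementary fact that a surjection of $H$-modules induces an $H$-equivariant rational map on projectivizations. Since $H<G$ and $X$ is $G$-invariant, $X$ is in particular $H$-invariant, so the only content is the interaction of $\pi$ with the $H$-action. By hypothesis the splitting $T = W \oplus W^{c}$ is a splitting of $H$-modules, so the linear projection $q \colon T \to T/W^{c} \simeq W$ with kernel $W^{c}$ satisfies $q(h \cdot v) = h \cdot q(v)$ for all $h \in H$ and $v \in T$; this is the one algebraic input and it is immediate from the definitions.

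Next I would transport this to projective space. The rational map $\pi$ is the map induced by $q$; it is a morphism precisely on $\PP T \setminus \PP W^{c}$, where $\pi([v]) = [q(v)]$. Because every $h \in H$ preserves the subspace $W^{c}$, it preserves $\PP W^{c}$ and hence also $\PP T \setminus \PP W^{c}$, and there one computes directly $\pi(h \cdot [v]) = [q(hv)] = [h\, q(v)] = h \cdot [q(v)] = h \cdot \pi([v])$. Hence $\pi$ is $H$-equivariant on its locus of definition. Restricting to $X$: either $X \subseteq \PP W^{c}$, in which case $\pi(X)$ is empty and the statement is vacuous, or $X^{\circ} := X \setminus \PP W^{c}$ is a dense open $H$-invariant subset of $X$ and $\pi(X^{\circ})$ is an $H$-invariant constructible subset of $\PP W$.

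Finally I would take Zariski closures. With the usual convention that, for a rational map from a projective variety, $\pi(X)$ denotes the closure $\overline{\pi(X^{\circ})}$ — which by elimination theory (equivalently, eliminating the coordinates on $W^{c}$ from the homogeneous ideal of $X$) is a projective subvariety of $\PP W$ — the conclusion follows, since $H$ acts on $\PP W$ by homeomorphisms and the closure of the $H$-invariant set $\pi(X^{\circ})$ is again $H$-invariant.

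I do not anticipate a genuine obstacle here: the one point requiring care is purely bookkeeping, namely that $\pi$ is only a rational map with indeterminacy locus $\PP W^{c}$, so that ``$\pi(X)$'' must be interpreted as a Zariski closure for the phrase ``$\pi(X)$ is a subvariety'' to be meaningful; once this is pinned down, the equivariance is a one-line diagram chase and the invariance of the image follows from the continuity of the $H$-action.
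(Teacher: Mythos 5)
Your proposal is correct and follows essentially the same route as the paper: restrict to the open locus where $\pi$ is defined, note that this locus and its intersection with $X$ are $H$-stable, use the $H$-module splitting to get equivariance of $\pi$ there, and then pass to the Zariski closure of the image. The only cosmetic difference is that the paper verifies invariance of the closure by a limit-point argument while you invoke the general fact that the closure of an $H$-invariant set is $H$-invariant since $H$ acts by homeomorphisms; these are the same idea.
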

This lemma tells us that if we are presented with a variety that is the projection from a $G$-variety, then we should look for the symmetry group of our variety among subgroups of $G$.
\begin{proof}
We must consider the fact that $\pi$ is only a rational map: certainly, $\pi (x) = 0$ if $x\in W^{c}$, so the map is not defined at all points.  

Let $U$ be the open set defined by $U = \{[w_{1}+w_{2}] \mid w_{1}\neq 0, w_{1}\in W, w_{2}\in W^{c}\}$.  Let $U_{X} = U \cap X$ denote the relatively open set.  

Let $Y := \overline{\pi(U_{X})}$, where the bar denotes Zariski closure.  
Claim: $H.U_{X} \subset U_{X}$.  Suppose $h \in H$ and $[w_{1}+w_{2}] \in U$. Then $h.[w_{1}+w_{2}] = [h.w_{1}+ h.w_{2}] \in U$ since $0\neq h.w_{1}\in W$ and $h.w_{2}\in W^{c}$.
Since $X$ is preserved by $G$, it is also preserved by any subgroup $H<G$, and therefore we conclude that $H.U_{X} \subset U_{X}$.

Let $y\in \pi(U_{X})$ and let $h\in H$.  By definition, $\pi$ is surjective onto its image, so let $x\in U_{X}$ be such that $\pi(x)=y$.  Now we use the $H$-equivariance of $\pi$ to conclude that $h.y = h.\pi(x) = \pi(h^{-1}.x)$.  But by the claim, we know that $h^{-1}.x \in U_{X}$, so $\pi(h^{-1}.x) \in \pi(U_{X})$.

Suppose $y\in \overline{\pi(U_{X})}$.  Then choose a sequence $y_{i} \rightarrow y \in Y$ such that $\exists x_{i}\in U_{X}$ and $\pi(x_{i})=y_{i}$.

If $h\in H$ then $h.y_{i}=h.\pi(x_{i})=\pi(h^{-1}.x_{i})\in Y$ for all $i$.  If $\{p_{i}\}\subset Y$ is a convergent sequence such that $p_{i}\rightarrow p$, and $f$ is a polynomial which satisfies $f(p_{i}) =0 $, then by continuity, $f(p) =0$ also. So $Y$ must contain all of its limit points, and therefore $h.y_{i}\rightarrow h.y\in Y$, and  we conclude that $Y$ is an $H$-variety.
\end{proof}

%%%%%%%%%%%%%%%%%%%%%%%%%%%%%%From Thesis Above %%%%%%%%%%%%%%%%%%%%%%%%%%%%

\begin{lemma}\label{glemma}
Let $X$ be the variety of $n\times n$ symmetric matrices which have E-rank one or less.
Then $X$ is an irreducible variety of dimension $2n$, and moreover the image $\varphi(X)$ is an irreducible $G$-variety for $G \simeq \G \subset Sp(2n)$.

\end{lemma}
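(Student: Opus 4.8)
The plan is to prove the two claims of the lemma in turn: that $X$ is irreducible of dimension $2n$, and that $\varphi(X)$ is an irreducible $G$-variety with $G\simeq\G\subset Sp(2n)$.

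To handle $X$, I would first observe that its defining equations — the $2\times 2$ E-minors $a_{i,k}a_{j,l}-a_{i,l}a_{j,k}$ with $i,j,k,l$ pairwise distinct — involve only off-diagonal entries, so $X$ is a product $X^{\circ}\times\CC^{n}$, the factor $\CC^{n}$ recording the unconstrained diagonal and $X^{\circ}\subset\CC^{\binom{n}{2}}$ being the common zero locus of the relations $a_{i,k}a_{j,l}=a_{i,l}a_{j,k}$ on the off-diagonal coordinates. Then I would identify $X^{\circ}$ with the affine cone $C_{n}$ over the image of $\PP^{n-1}$ under the map $[y]\mapsto[(y_{i}y_{j})_{i<j}]$ of all pairwise products: the inclusion $C_{n}\subseteq X^{\circ}$ is immediate since those products manifestly satisfy the E-minor relations, and the reverse inclusion is the statement that any off-diagonal symmetric array killed by all $2\times 2$ E-minors is a limit of such pairwise-product arrays — on the dense open set where no off-diagonal entry vanishes this is the usual reconstruction $a_{i,j}=y_{i}y_{j}$ (possible since $n\geq 3$), and the coordinate-hyperplane strata are lower-dimensional degenerations already lying in $C_{n}$. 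Since $C_{n}$ is the closure of the image of the irreducible $\CC^{n}$, it is irreducible, and since the parametrization is generically $2$-to-$1$ it has dimension $n$; hence $X=C_{n}\times\CC^{n}$ is irreducible of dimension $2n$. (For $n=4$ this is exactly the prime ideal (\ref{simple ideal}) found in Section \ref{polynomials}.)

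For the second claim, $\varphi(X)$ is irreducible because it is the closure of the image, under the rational map $\varphi$, of the irreducible variety $X$. The $G$-invariance is where Lemma \ref{equivariant} is designed to be used. Following the set-up in the proof of Proposition \ref{fixed}, I would realize $\varphi$ as the restriction to the (chart of) symmetric matrices of the composition of the Pl\"ucker-type embedding $\psi$ into $\PP\!\left(\bigwedge^{n}V\right)$, $V=E\oplus F\simeq\CC^{2n}$, with the linear projection onto the $2^{n}$ principal-minor coordinates $e_{R}\wedge f_{S}$, $R\cap S=\emptyset$. Inside $\PP\!\left(\bigwedge^{n}V\right)$ the symmetric matrices sweep out (a chart of) the Lagrangian locus, which is stable under $Sp(2n)$ and in particular under $\G\subset Sp(2n)$ via the embedding of Proposition \ref{fixed}; and Proposition \ref{fixed} shows that $\G$ fixes every E-minor, hence preserves the E-rank $\leq 1$ sublocus of this Lagrangian locus. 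Applying Lemma \ref{equivariant} with the big group $Sp(2n)$, the $G$-variety taken to be this E-rank $\leq 1$ sublocus, $H=\G$, and $\pi$ the projection to the principal-minor coordinates, we conclude that the image $\varphi(X)$ is a $\G$-variety. That $\G$ (and not a larger group) is the relevant symmetry group follows from the explicit inclusion $\G\subset Sp(2n)$ together with the $\G$-equivariance of $\varphi$ onto $\CC^{2^{n}}=V_{1}^{*}\otdot V_{n}^{*}$.

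I expect the main obstacle to be in the second claim: arranging the precise realization of $\varphi$ as a projection from a genuinely $Sp(2n)$-stable variety on which the E-rank $\leq 1$ condition is visibly preserved. Proposition \ref{fixed} gives that E-rank is a $\G$-invariant of the matrix $A$, but to feed this into Lemma \ref{equivariant} one must carefully match the rational $\G$-action on symmetric matrices (through the Lagrangian chart) with the linear $\G$-action on $\bigwedge^{n}V$, so that the sublocus one projects from is literally $\G$-stable and literally maps onto $\varphi(X)$. A smaller but genuine point, needed for the dimension count, is to verify for general $n$ that the $2\times 2$ E-minors cut out exactly the cone $C_{n}$ and not some larger scheme — a routine toric computation extending the $n=4$ check.
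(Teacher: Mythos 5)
Your treatment of the second claim is essentially the paper's own argument: the paper likewise realizes $\varphi$ through the Lagrangian Grassmannian $G_{\omega}(n,2n)$ sitting in the space $\Gamma_{n}$ of non-redundant minors of symmetric matrices, uses Proposition \ref{fixed} to see that the linear space $L$ cut out by the vanishing of all $k\times k$ E-minors, $k\geq 2$, is $\G$-stable, notes that the inclusion $\G\subset Sp(2n)$ makes $G_{\omega}(n,2n)$ (hence $G_{\omega}(n,2n)\cap \PP L$) $\G$-invariant, and then applies Lemma \ref{equivariant} to the projection onto the principal-minor coordinates, after identifying $\pi(G_{\omega}(n,2n)\cap\PP L)$ with $\varphi(X)$ — exactly the matching-up you flag as the delicate point. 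Irreducibility of $\varphi(X)$ from irreducibility of $X$ is also how the paper argues.

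The gap is in the first claim. The splitting $X=X^{\circ}\times\CC^{n}$ and the inclusion $C_{n}\subseteq X^{\circ}$ are fine, but the reverse inclusion is not established: your argument presumes that the locus where all off-diagonal entries are nonzero is dense in $X^{\circ}$, which is equivalent to saying $X^{\circ}$ has no component inside a coordinate hyperplane — precisely what is in question. Degenerate zero patterns are genuinely nontrivial here: an ``arrow'' matrix, with all off-diagonal entries zero except two or more entries $a_{1,n},a_{2,n},\dots$ in a single row/column, satisfies every $2\times 2$ E-minor relation yet admits no factorization $a_{i,j}=y_{i}y_{j}$ (it forces $y_{1}y_{2}\neq 0$ while $a_{1,2}=0$); it lies in $C_{n}$ only as a limit (e.g. $y_{i}=\epsilon b_{i}$, $y_{n}=c/\epsilon$), and one must show that \emph{every} such stratum is absorbed this way and contributes no extra component. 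This is exactly the content of the paper's Claim 2, which it proves by an explicit induction on $n$ showing that an E-rank $\leq 1$ symmetric matrix has its off-diagonal part of the form $y_{i}y_{j}$ (or degenerates into that family), rather than by the dimension-free assertion you defer as ``a routine toric computation.'' Until that verification is supplied, both the irreducibility of $X$ and the count $\dim X=2n$ are unproved; the fix is available (the paper's induction, or a careful support-graph/limit argument), but it must be written out, since it is the core of this half of the lemma rather than a side check.
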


%\begin{lemma}
%A matrix $A$ has E-rank one if and only if it can be written as $A = D + T$ with $D$ diagonal, and $T$ rank one.
%\end{lemma}
%\begin{proof}
%It is clear that if $A = D + T$ with with $D$ diagonal, and $T$ rank $k$, then all of the $(k+1) \times (k+1)$ E-minors of $A$ certainly vanish, so $A$ has E-rank no greater than $k$.

%
%By definition, the E-minors do not depend on the diagonal entries of $A$.  This means that the E-rank of $A$ is unchanged by adding a diagonal matrix. 

%\end{proof}

\begin{proof}
\textbf{Claim 1:}  $\varphi(X)$ is an irreducible $\G$-variety. 
The map $\varphi$ is a rational map, so the fact that $\varphi(X)$ is an irreducible variety will come from the next claim that $X$ is irreducible.  Here we prove the $\G$-invariance. Our proof is similar to methods used in \cite{oeding_thesis} in the study of the symmetry of $Z_{n}$.

Let $\Gamma_{n} \simeq \CC^{\binom{2n}{n}-\binom{2n}{n-2}}$ denote the space of all non-redundant minors of $n\times n$ symmetric matrices.
Let $G_{\omega}(n,2n)\subset \PP \Gamma_{n}$ denote the Lagrangian Grassmannian embedded by the a variant of the map $\psi$ which we introduced in the proof of Proposition \ref{fixed} that takes a symmetric matrix to a vector of its non-redundant minors.  This map and its variants were studied in a more general context by Landsberg and Manivel \cite{LM02}, and the fact that this variant of $\psi$ defines $G_{\omega}(n,2n)$ can be found in \cite{LM02}. $G_{\omega}(n,2n)$ is a homogeneous variety and in particular it is invariant under the action of the symplectic group $SP(2n)$.

Let $\pi:G_{\omega}(n,2n) \dashrightarrow Z_{n}$ denote the projection by forgetting the non-principal minors. We will use Lemma \ref{equivariant} to prove the $\G$-invariance of $\varphi(X)$ by checking that the hypotheses are satisfied. 

Consider the linear space $L\subset \PP \Gamma_{n}$ defined by setting all $k\times k$ E-minors for $k\geq 2$ equal to zero.  Then by definition $\pi(G_{\omega}(n,2n)\cap \PP L) = \varphi(X)$.
Proposition \ref{fixed} implies that $L$ is fixed by the action of $\G$. 

$\Gamma_{n}$ is an $SP(2n)$-module, so by restriction, it is also an $\G$-module.  We further note that $L$ is a vector sub-space of $\Gamma_{n}$ and a $\G$-module, so it is a $\G$-submodule of $\Gamma_{n}$.

One can check that the inclusion of $SL(2)^{\times n}$ as a subgroup of $GL(2n)$ we gave in the proof of Proposition \ref{fixed} actually is an inclusion of $SL(2)^{\times n}$ as a subgroup of $SP(2n)\subset GL(2n)$.  
So $\G$ must act on $G_{\omega}(n,2n)$ and leave it invariant, and in particular $ G_{\omega}(n,2n)\cap \PP L$ is $\G$-invariant.  So we have satisfied the hypotheses of Lemma \ref{equivariant}, with $G = SP(2n)$, $H=\G$, $T = \Gamma_{n}$, and $ W=L$ ($W^{c}$ exists because $\G$ is reductive). So Lemma \ref{equivariant} implies that the image of $G_{\omega}(n,2n)\cap L$ under the projection $\pi$ is a $\G$-variety.

\textbf{Claim 2:} $X$ is irreducible. We work on the set where $t\neq 0$. Consider the following set of matrices, $Y= \{ A\in  S^{2}V  \mid A = D + T, D\text{ diagonal},  T \in v_{2}(\PP V) \}$. 

The variety $Y$ can be parametrized via
\[
\hspace{-12em}\PP (\CC^{n} \oplus \CC^{n} \oplus \CC) \dashrightarrow \PP(S^{2}\CC^{n} \oplus \CC)
\]  
\[
[w_{1}, \ldots, w_{n}, y_{1},\ldots,y_{n}, t] \mapsto 
\left[ \left(\begin{array}{cccccc}
w_{1}^{2} &  y_{1}y_{2} & \dots &\dots & y_{1}y_{n} \\
 y_{1}y_{2} &w_{2}^{2} & y_{2}y_{3} &\dots & y_{2}y_{n} \\
\vdots & y_{2}y_{3}& w_{3}^{2} &\vdots & \vdots \\

\vdots & \vdots & \dots& \ddots &y_{n-1}y_{n}\\

 y_{1}y_{n} &y_{2}y_{n} & \dots & y_{n-1}y_{n} & w_{n}^{2} \\
\end{array}
\right), t^{2} \right]
.\]
Then it is clear that all of the $2\times 2$ E-minors vanish on $Y$, so $Y \subset X$.

For the general case, we need to see that every matrix which has E-rank one can be expressed in this form.  Work by induction.  The base case is trivial.
Now suppose \[
A=
\left(\begin{array}{ccccccc}
w_{1}^{2} &  y_{1}y_{2} & \dots &\dots & y_{1}y_{n} & a_{1,n+1}\\
 y_{1}y_{2} &w_{2}^{2} & y_{2}y_{3} &\dots & y_{2}y_{n} &a_{2,n+1} \\
\vdots & y_{2}y_{3}& w_{3}^{2} &\vdots & \vdots  & \vdots \\

\vdots & \vdots & \dots& \ddots &y_{n-1}y_{n} &a_{n-1,n+1}\\

 y_{1}y_{n} &y_{2}y_{n} & \dots & y_{n-1}y_{n} & w_{n}^{2}  & a_{n,n+1}\\

a_{1,n+1} &a_{2,n+1} & \dots & a_{n-1,n+1} & a_{n,n+1}  & a_{n+1,n+1}
\end{array}
\right)
,\]
where we have assumed by induction that the upper left block of $A$ is in the desired form.

The $2\times 2$ E-minors force the vectors $( y_{1}y_{n} , y_{2}y_{n} , \dots , y_{n-1}y_{n})$ and \linebreak $(a_{1,n+1} , a_{2,n+1} , \dots , a_{n-1,n+1})$ to be proportional, so without loss of generality we may assume that $a_{i,n+1} = y_{i}y_{n+1}$ for $ 1\leq i\leq n-1$, and $y_{n+1}$ an arbitrary parameter.  By comparing to the first column, we find that the vectors $(y_{1}y_{2}, \dots , y_{1}y_{n})$ and $(a_{2,n+1} , a_{2,n+1} , \dots , a_{n,n+1})$ must be proportional, and therefore $a_{i,n+1} = y_{i}y_{n+1}'$ for $ 2\leq i\leq n$, and $y_{n+1}'$ an arbitrary parameter.  Combining this information, we must have $a_{j} = y_{j}y_{n+1} = y_{j}y_{n+1}'$ for $2\leq j\leq n-1$. If $y_{j} \neq 0$ for a single $j$ with $2\leq j\leq n-1$ then we find that $y_{n+1}= y_{n+1}'$, and in this case, $A$ is in the desired form.  Otherwise,

\[
A = 
\left(\begin{array}{ccccccc}
w_{1}^{2} & 0 & \dots &\dots & 0 & a_{1,n+1}\\
0 &w_{2}^{2} &0 &\dots & 0 &a_{2,n+1} \\
\vdots & 0 & w_{3}^{2} &\vdots & \vdots  & \vdots \\

\vdots & \vdots & \dots& \ddots & 0  &a_{n-1,n+1}\\

0 & 0  & \dots & 0 & w_{n}^{2}  & a_{n,n+1}\\

a_{1,n+1} &a_{2,n+1} & \dots & a_{n-1,n+1} & a_{n,n+1}  & a_{n+1,n+1}
\end{array}
\right).\]

But this is also in the form we want because (over $\CC$), we can set $a_{i,n+1} = y''_{i} y''_{n+1}$ for $1\leq i \leq n$ and $a_{n+1,n+1} = w_{n+1}^{2}$ for some arbitrary parameters $ y''_{i}$, $y''_{n+1}$, and $w_{n+1}$.

\textbf{Claim 3:} $X$ has dimension $2n$.  This is clear from the parameterization in the previous claim.  The map we gave is generically finite to one and the source has dimension $2n$.
\end{proof}

\begin{proof}[Proof of Proposition \ref{E-rank one}]
By Proposition \ref{veronese} above, 
\[G.\varphi(\PP(v_{2}(\CC ^{n}) \oplus \CC)) = \tS,\] 
where $v_{2}(\CC^{n})$ is the rank one complex symmetric $n\times n$ matrices.
By Lemma \ref{glemma} we have $\varphi(X) = G.\varphi(X)$,
 and since the condition rank one is more restrictive than the condition $E$-rank one, $X \supset \PP(v_{2}(\CC ^{n}) \oplus \CC) $, therefore 
\[\varphi(X) = G. \varphi(X) \supset G. \varphi(v_{2}(\PP V) \oplus \CC) = \tau(Seg(\PP^{1}\times\dots\times \PP^{1})).\]
So we have $\varphi(X)   \supset  \tau(Seg(\PP^{1}\times\dots\times \PP^{1}))$, an inclusion of two varieties that are both irreducible and of the same dimension, therefore we must have equality.
\end{proof}

\section{Conclusion}

In summary, to prove the set-theoretic version of the Landsberg-Weyman conjecture, we needed to show that the tangential variety 
$\tS$ contains the zeroset of the polynomials coming from the modules of cubic polynomials with four $S_{2,1}$ factors and the rest $S^{3}$ and the quartic polynomials with three $S_{2,2}$ factors and the rest $S^{4}$.  The quartic polynomials are set-theoretic defining equations of the variety of principal minors of symmetric matrices.  We studied the pull-back of the cubic polynomials to the space of symmetric matrices and found that this pull-back defines the set of E-rank one symmetric matrices.  Finally we showed that the image of the set of E-rank one symmetric matrices under the principal minor map is precisely the tangential variety.  Therefore if $z$ in the zeroset of the cubic and quartic polynomials in our modules, then $z$ has a E-rank one symmetric matrix $A$ mapping to it under the principal minor map, thus implying that $z$ is on the tangential variety. This completes the proof of Theorem \ref{main theorem}.

\section*{Acknowledgments }
This work stemmed from conversations with J.M. Landsberg, the author's thesis advisor.  It was he who suggested that the author try to find a meaningful notion of rank for the variety of principal minors, and he also pointed out that the hyperdeterminantal module of the Holtz-Sturmfels Conjecture also appeared in the Landsberg-Weyman Conjecture. We thank him for his advice and direction.

We also thank Prof.'s Rick Miranda and Ciro Ciliberto for suggesting the proof of Claim 2 in Lemma \ref{glemma}.

\bibliographystyle{amsplain}
\bibliography{bibdata}

\def\Dbar{\leavevmode\lower.6ex\hbox to 0pt{\hskip-.23ex \accent"16\hss}D}
\providecommand{\bysame}{\leavevmode\hbox to3em{\hrulefill}\thinspace}
\providecommand{\MR}{\relax\ifhmode\unskip\space\fi MR }
% \MRhref is called by the amsart/book/proc definition of \MR.
\providecommand{\MRhref}[2]{%
  \href{http://www.ams.org/mathscinet-getitem?mr=#1}{#2}
}
\providecommand{\href}[2]{#2}
\begin{thebibliography}{10}

\bibitem{BorodinRains}
A.~Borodin and E.~Rains, \emph{Eynard-{M}ehta theorem, {S}chur process, and
  their {P}faffian analogs}, J. Stat. Phys. \textbf{121} (2005), no.~3-4,
  291--317. \MR{2185331 (2006k:82039)}

\bibitem{FH}
W.~Fulton and J.~Harris, \emph{Representation theory: A first course}, Graduate
  Texts in Mathematics, vol. 129, New York: Springer-Verlag, 1991. \MR{1153249
  (93a:20069)}

\bibitem{HSt}
O.~Holtz and B.~Sturmfels, \emph{Hyperdeterminantal relations among symmetric
  principal minors}, J. Algebra \textbf{316} (2007), no.~2, 634--648.
  \MR{2358606 (2009c:15032)}

\bibitem{LM02}
J.~M. Landsberg and L.~Manivel, \emph{Construction and classification of
  complex simple {L}ie algebras via projective geometry}, Selecta Math. (N.S.)
  \textbf{8} (2002), no.~1, 137--159. \MR{1890196 (2002m:17006)}

\bibitem{LM04}
\bysame, \emph{On the ideals of secant varieties of {S}egre varieties}, Found.
  Comput. Math. \textbf{4} (2004), no.~4, 397--422. \MR{2097214 (2005m:14101)}

\bibitem{LandsbergMorton}
J.~M. Landsberg and J.~Morton, \emph{The geometry of tensors: Applications to
  complexity, statistics and engineering}, in preparation.

\bibitem{LWtan}
J.~M. Landsberg and J.~Weyman, \emph{On tangential varieties of rational
  homogeneous varieties}, J. Lond. Math. Soc. (2) \textbf{76} (2007), no.~2,
  513--530. \MR{2363430 (2008m:14101)}

\bibitem{oeding_thesis}
L.~Oeding, \emph{{G}-varieties and principal minors of symmetric matrices},
  Ph.D. Thesis.

\bibitem{oeding_pm_paper}
\bysame, \emph{Set-theoretic defining equations of the variety of principal
  minors of symmetric matrices},  (2009), arXiv:0809.4236v1.

\bibitem{Weyman}
J.~Weyman, \emph{Cohomology of vector bundles and syzygies}, Cambridge Tracts
  in Mathematics, vol. 149, Cambridge University Press, 2003. \MR{1988690
  (2004d:13020)}

\bibitem{Zak}
F.~L. Zak, \emph{Tangents and secants of algebraic varieties}, Translations of
  Mathematical Monographs, vol. 127, Providence: American Mathematical Society,
  1993, Translated from the Russian manuscript by the author. \MR{1234494
  (94i:14053)}

\end{thebibliography}

\end{document}